\numberwithin{equation}{section}
\definecolor{cqcqcq}{rgb}{0.7529411764705882,0.7529411764705882,0.7529411764705882}
\definecolor{qqzzff}{rgb}{0.,0.6,1.}
\definecolor{wwccqq}{rgb}{0.4,0.8,0.}
\definecolor{ffqqqq}{rgb}{1.,0.,0.}
\title{On The Classification of Quantum Lens Spaces of Dimension at most 7}
\author{Thomas Gotfredsen}
\address{Department of Mathematics and Computer Science, University of Southern Denmark, 
Campusvej 55, 5230 Odense M, Denmark} 
\email{thgot@imada.sdu.dk} 
\author{Sophie Emma Zegers}
\address{Department of Mathematics and Computer Science, University of Southern Denmark, 
Campusvej 55, 5230 Odense M, Denmark} 
\email{mikkelsen@imada.sdu.dk}
\thanks{The work was supported by  the 
DFF-Research Project 2 on `Automorphisms and invariants of operator algebras', Nr. 7014--00145B}
\date\today
\subjclass[2010]{46L35; 58B34; 05C30} 
\keywords{Graph $C^*$-algebras, Classification,  Quantum lens spaces.}
\begin{document}

\begin{abstract}
We investigate quantum lens spaces, $C(L_q^{2n+1}(r;\underline{m}))$, introduced by
Brzeziński-Szymański as graph $C^*$-algebras. 
For $n\leq 3$, we give a number-theoretic invariant, when all but one weight are coprime to the order of the acting group $r$. This builds upon the work of Eilers, Restorff, Ruiz and Sørensen.
\end{abstract}

\theoremstyle{plain}
\newtheorem{theorem}{Theorem}[section]
\newtheorem{corollary}[theorem]{Corollary}
\newtheorem{lemma}[theorem]{Lemma}
\newtheorem{proposition}[theorem]{Proposition}
\newtheorem{conjecture}[theorem]{Conjecture}
\newtheorem{commento}[theorem]{Comment}
\newtheorem{problem}[theorem]{Problem}
\newtheorem{remarks}[theorem]{Remarks}

\theoremstyle{definition}
\newtheorem{example}[theorem]{Example}

\newtheorem{definition}[theorem]{Definition}

\newtheorem{remark}[theorem]{Remark}

\newcommand{\Nb}{{\mathbb{N}}}
\newcommand{\Rb}{{\mathbb{R}}}
\newcommand{\Tb}{{\mathbb{T}}}
\newcommand{\Zb}{{\mathbb{Z}}}
\newcommand{\Cb}{{\mathbb{C}}}

\newcommand{\Af}{\mathfrak A}
\newcommand{\Bf}{\mathfrak B}
\newcommand{\Ef}{\mathfrak E}
\newcommand{\Gf}{\mathfrak G}
\newcommand{\Hf}{\mathfrak H}
\newcommand{\Kf}{\mathfrak K}
\newcommand{\Lf}{\mathfrak L}
\newcommand{\Mf}{\mathfrak M}
\newcommand{\Rf}{\mathfrak R}

\newcommand{\x}{\mathfrak x}
\def\C{\mathbb C}
\def\N{\mathbb N}
\def\R{\mathbb R}
\def\T{\mathbb T}
\def\Z{\mathbb Z}

\def\A{{\mathcal A}}
\def\B{{\mathcal B}}
\def\D{{\mathcal D}}
\def\E{{\mathcal E}}
\def\F{{\mathcal F}}
\def\G{{\mathcal G}}
\def\H{{\mathcal H}}
\def\J{{\mathcal J}}
\def\K{{\mathcal K}}
\def\LL{{\mathcal L}}
\def\N{{\mathcal N}}
\def\M{{\mathcal M}}
\def\N{{\mathcal N}}
\def\OO{{\mathcal O}}
\def\P{{\mathcal P}}
\def\Q{{\mathcal Q}}
\def\SS{{\mathcal S}}
\def\T{{\mathcal T}}
\def\U{{\mathcal U}}
\def\W{{\mathcal W}}

\def\ext{\operatorname{Ext}}
\def\span{\operatorname{span}}
\def\clsp{\overline{\operatorname{span}}}
\def\Ad{\operatorname{Ad}}
\def\ad{\operatorname{Ad}}
\def\tr{\operatorname{tr}}
\def\id{\operatorname{id}}
\def\en{\operatorname{End}}
\def\aut{\operatorname{Aut}}
\def\out{\operatorname{Out}}
\def\coker{\operatorname{coker}}
\def\pol{\mathcal{O}}

\def\la{\langle}
\def\ra{\rangle}
\def\rh{\rightharpoonup}
\def\cl{\textcolor{blue}{$\clubsuit$}}

\def\bst{\textcolor{blue}{$\bigstar$}}

\newcommand{\norm}[1]{\left\lVert#1\right\rVert}
\newcommand{\inpro}[2]{\left\langle#1,#2\right\rangle}
\newcommand{\Mod}[1]{\ (\mathrm{mod}\ #1)}

\renewcommand{\bibname}{References}
\maketitle

\addtocounter{section}{-1}

\section{Introduction}

In \cite{hs} Hong and Szymański gave a description of quantum lens spaces as graph $C^*$-algebras. This description was extended in \cite{BS} by Brzeziński and Szymański to also include weights which are not necessarily coprime with the order of the acting finite cyclic group. In noncommutative geometry quantum lens spaces are objects of increasing interest, \cite{bf, abl, dl} where noncommutative line bundles with  quantum lens spaces as total spaces are investigated. 

In the present paper we deal with classification of quantum lens spaces of dimension at most 7, with certain conditions on their weights. It can immediately be observed that two quantum lens spaces can only be isomorphic if the dimension and the order of the acting group remains the same. Classification of quantum lens spaces will thus only depend on the given weights. It is not sufficient to determine isomorphism of quantum lens spaces by only considering their K-groups and the order, \cite[Remark 7.10]{errs}.  In \cite{errs} Eilers, Restorff, Ruiz and Sørensen came with an important classification result of finite graph $C^*$-algebras using the reduced filtered K-theory. As opposed to the classification of Cuntz-Krieger algebras given by Restorff in \cite{r}, which the result in \cite{errs} is based on, quantum lens spaces fall within the scope of this classification. As an application of the classification result, Eilers, Restorff, Ruiz and Sørensen investigated 7-dimensional quantum lens spaces for which all the weights are coprime with the order of the acting cyclic group $\Zb_r$. They managed to reduce the classification result to elementary matrix algebras using $SL_{\mathcal{P}}$-equivalence and to prove that the lowest dimension for which we get different quantum lens spaces is dimension $7$. Here they showed that there exists two different quantum lens spaces when $r$ is a multiple of 3, and precisely one when this is not the case. 

Further investigation of quantum lens spaces, as defined in \cite{hs}, was conducted in \cite{jkr} by Jensen, Klausen and Rasmussen using $SL_{\mathcal{P}}$-equivalence. For a fixed $r$ they showed how large the dimension of the quantum lens space $C(L_q(r;m_1,...m_n))$ must be to obtain non-isomorphic quantum lens spaces. The work is based on computer experiments by Eilers, who came up with a suggestion for a number $s$ such that for $n<s$ the quantum lens spaces are all isomorphic.

In this paper we will extend the result by Eilers, Restorff, Ruiz and Sørensen to quantum lens spaces of dimension less than or equal to $7$ for which $\gcd(m_i,r)\neq 1$ for one and only one $i$. The work builds on computer experiments, which were made in collaboration with Søren Eilers. We use a program written by Eilers in Maple 2019\footnote{Maplesoft, a division of Waterloo Maple Inc., Waterloo, Ontario.}, which has been optimised slightly by the present authors. Concretely, the program computes the adjacency matrices and isomorphism classes given the order $r$ and the set $\lbrace \gcd(m_i,r)\colon i=1,\dots 4 \rbrace$. By considering various combinations of the values of $r$ and the weights, we came up with a suggestion for an invariant, depending on which weight that is not coprime with the order of the acting group. In this way, experiments have played a crucial role in determining the statement of the presented theorems.

The procedure to prove this experimental observation follows by first constructing the adjacency matrices, which are presented in section \ref{adjacencymatrix}. Afterwards we calculate an invariant using $SL_{\mathcal{P}}$-equivalence which involves some long calculations. Therefore the proofs are postponed to section \ref{invariant}, and the main theorems (Theorem \ref{Thm:Main} \& \ref{5dimensional}) are stated in section \ref{classification}.
\\ 
\\
\textbf{Acknowledgements} The authors would like to thank Søren Eilers for helpful discussions as well as providing a program which has been crucial to the investigation. The authors also gratefully acknowledge helpful comments and suggestions from Wojciech Szymański and James Gabe.

\section{Preliminaries}
We recall first some concepts of graph $C^*$-algebras which are needed in this paper. For the definition of a graph $C^*$-algebra we refer to \cite{bpis,flr}. Let $E$ be a directed graph, we denote by $A_E$ the adjacency matrix for $E$ and $B_E:=A_E-I$. A graph is called \textit{finite} if it has finitely many edges and vertices. 

For a directed graph $E=(E^0,E^1,r,s)$, we recall that a vertex is \textit{regular} if $s^{-1}(v)=\{e\in E^1| \ s(e)=v\}$ is finite and nonempty, it is called \textit{singular} if this is not the case. If the graph has finitely many vertices, we say that a nonempty subset $S\subseteq E^0$ is \textit{strongly connected} if for any pair of vertices $v,w\in S$ there exists a path from $v$ to $w$. We let $\Gamma_E$ be the set of all strongly connected components and all singletons of singular vertices which are not the base of a cycle. The structure of $\Gamma_E$ will become crucial in the definition of $SL_{\mathcal{P}}$-equivalence. 

\subsection{Quantum Lens spaces}
The quantum $(2n+1)$-sphere by Vaksman and Soibelman, denoted $C(S_q^{2n+1})$, is the universal $C^*$-algebra generated by $z_0,z_1,...,z_n$ with the following relations: 
\[
\begin{aligned}
&z_iz_j=qz_jz_i, \;\;\; \text{for} \; i<j, \ \ z_iz_j^*=qz_j^*z_i, \;\;\; \text{for} \; i\neq j, \\
&z_iz_i^*=z_i^*z_i+(q^{-2}-1)\sum_{j=i+1}^n z_jz_j^*, \ \ \sum_{j=1}^n z_jz_j^*=1,
\end{aligned} 
\]
where $q\in (0,1)$, see \cite{vs}. Let $\underline{m}=(m_0,m_1,...,m_n)$ be a sequence of positive integers. The $C^*$-algebra $C(S_q^{2n+1})$ admits, by universality, an action of $\Z_r$ for any $r\in\mathbb{N}$, given by 
\[
z_i\mapsto \theta^{m_i}z_i,
\]
where $\theta$ is a generator of $\Z_r$. The quantum lens space $C(L_q^{2n+1}(r;\underline{m}))$ is defined as the fixed point algebra of $C(S_q^{2n+1})$ under this action.

It was proven in \cite{BS} that $C(L_q^{2n+1}(r;\underline{m}))$ is isomorphic to the graph $C^*$-algebra $C^*(L_{2n+1}^{r;\underline{m}})$. The graph $L_{2n+1}^{r;\underline{m}}$ is constructed using the skew product graph $L_{2n+1}\times_{\underline{m}}\Zb_r$, which has vertices $(v_i,k), i=0,...,n, k=0,...,r-1$ and edges $(e_{ij}, k), i,j=0,...,n, i\leq j, k=0,...,r-1$, with source $(v_i,k-m_i \Mod{r})$ and range $(v_j,k)$. The graph $L_{2n+1}^{r;\underline{m}}$ is constructed using the notion of admissible paths which is defined in \cite{BS} as follows:
\begin{definition}
A path from $(v_i,s)$ to $(v_j,t)$ in  $L_{2n+1}\times_{\underline{m}}\Zb_r$ is called \textit{admissible} if it does not pass through any $(v_l,k)$ for which $l=i+1,...,j-1$ and $k=0,...,\gcd(m_l,r)-1$. 
\end{definition}
\begin{remark}
Comparing with the notion of 0-simple paths from \cite[Definition 7.4]{errs}, it is clear that the 0-simple paths are exactly the admissible paths when all weights are coprime to the order of the acting group.
\end{remark}
The graph $L_{2n+1}^{r;\underline{m}}$ has vertices $v_i^b,i=0,...,n, b=0,...,\gcd(m_i,r)-1$. There are edges $e_{ij;a}^{st}$ with source $v_i^s$ and range $v_j^t$ labelled by $a=1,...,n_{ij}^{st}$, where $n_{ij}^{st}$ is the number of admissible paths from $(v_i,s)$ to $(v_j,t)$. 
\\

We will in this paper investigate quantum lens spaces $C(L_q^7(r,\underline{m}))$ for which $\gcd(m_l,r)=n$ for a single $l \in \lbrace 0,1,2,3 \rbrace$ and the remaining weights are coprime to $r$. In the process of finding an invariant for 7-dimensional quantum lens spaces we will also be able to calculate one for quantum lens spaces of dimension 5. We will in the following therefore have our focus on 7-dimensional quantum lens spaces. 

Under the above assumptions on the weights the skew product graph $L_{7}\times_{\underline{m}}\Z_r$ consists of four levels, labeled by level 0,1,2 and 3, with edges going from level $i$ to $j$ if $i<j$. At the level on which $\gcd(m_l,r)=n$ we have $n$-cycles based on each of the vertices $(v_l,k), k=0,1,...,n-1$.
The graph $L_7^{r;\underline{m}}$ consists of four levels as before and $n+3$ vertices, which are all the base of a loop. There is one vertex in each level except for level $i$ where $\gcd(m_i,r)=n\neq 1$, here we have $n$ vertices. There is at least one edge going from a lower level to a higher one, but there are no edges between vertices at the same level. We will denote the vertices by $v_i, i=1,...,n+3$ as indicated in Figure \ref{fig:Graph1}. 
\begin{figure}[H]
\begin{center}
\resizebox{0.6\columnwidth}{!}{
\begin{tikzpicture}[line cap=round,line join=round,>=triangle 45,x=1cm,y=1cm]
\clip(-5.50780683325514,-3.5955086330893713) rectangle (6.105598048619293,3.3901303640380998);
\draw [line width=1.2pt,dotted] (-2,0)-- (3,0);
\draw (-2.9387809048404927,1.2786022036973024) node[anchor=north west] {$v_{k-1}$};
\draw (-3.2203179928859336,-0.02350682851285593) node[anchor=north west] {$v_k$};
\draw (2.469163724355169,-0.18187144053841572) node[anchor=north west] {$v_{k+n-1}$};
\draw (-2.340514592743931,-0.05869896451853588) node[anchor=north west] {$v_{k+1}$};
\draw (-2.9211848368376527,-0.6921574126207751) node[anchor=north west] {$v_{k+n}$};
\draw [line width=1.2pt,dotted] (-3,1)-- (-3,2);
\draw [line width=1.2pt,dotted] (-3,-1)-- (-3,-2);
\draw (-2.9387809048404927,2.316770215864861) node[anchor=north west] {$v_{1}$};
\draw (-2.9563769728433327,-1.7303254247883337) node[anchor=north west] {$v_{n+3}$};
\draw (0.6332208997360376,-0.12908323652989578) node[anchor=north west] {};
\draw (-5.173481541201179,2.492730895893261) node[anchor=north west] {$\text{Level 0}$};
\draw (-5.173481541201179,0.5219712795751834) node[anchor=north west] {$\text{Level k-1}$};
\draw (-5.103097269189819,-1.554364744759934) node[anchor=north west] {$\text{Level 3}$};
\begin{scriptsize}
\draw [fill=black] (-2,0) circle (2pt);
\draw [fill=black] (-3,0) circle (2pt);
\draw [fill=black] (-3,-1) circle (2pt);
\draw [fill=black] (-3,1) circle (2pt);
\draw [fill=black] (3,0) circle (2pt);
\draw [fill=black] (-3,2) circle (2pt);
\draw [fill=black] (-3,-2) circle (2pt);
\end{scriptsize}
\end{tikzpicture}}
\end{center}
    \caption{The graph $L_7^{r;\underline{m}}$}
    \label{fig:Graph1}
\end{figure}
\subsection{$SL_{\mathcal{P}}$-equivalence}
In \cite[Theorem 6.1]{errs} a classification result of graph $C^*$-algebras over finite graphs is given using the reduced filtered K-theory. It was also shown that for type I/postliminal $C^*$-algebras, we can give a classification working with $SL_{\mathcal{P}}$-equivalence instead of working directly with the reduced filtered K-theory.

We give a description of $SL_{\mathcal{P}}$-equivalence by considering the ideal structure of $C^*(L_7^{r;\underline{m}})$ when $\gcd(m_i,r)\neq 1$ for one and only one $i$. There exists by \cite[Lemma 3.16]{errs} an order preserving homeomorphism between the set of strongly connected components of $L_7^{r;\underline{m}}$, denoted $\Gamma_{L_7^{r;\underline{m}}}$, and the set of all proper ideals of $C^*(L_7^{r;\underline{m}})$ that are prime and gauge invariant, denoted Prime$_\gamma(C^*(L_7^{r;\underline{m}}))$. It can easily be seen that $\Gamma_{L_7^{r;\underline{m}}}$ is exactly equal to the collection of singleton sets of vertices. We write $\gamma_i\coloneqq \lbrace v_i \rbrace$ for each such set, and consequently we have $\Gamma_{L_7^{r;\underline{m}}}=\{\gamma_i|i=1,...,n+3\}$.

It follows immediately that $|\text{Prime}_\gamma(C^*(L_7^{r;\underline{m}}))|=n+3$. We therefore get non isomorphic quantum lens spaces for different values of $n$. The partial order on $\Gamma_{L_7^{r;\underline{m}}}$ is given as follows: $\gamma_j\leq \gamma_i$ if there is a path from $v_i$ to $v_j$. 
The set $\Gamma_{L_7^{r;\underline{m}}}$ can be illustrated by its component graphs, which are depicted in Figure \ref{Fig:Component}. In these graphs, an arrow from $\gamma_i$ to $\gamma_j$ indicates that $\gamma_i\geq \gamma_j$. 
\begin{figure}[H]
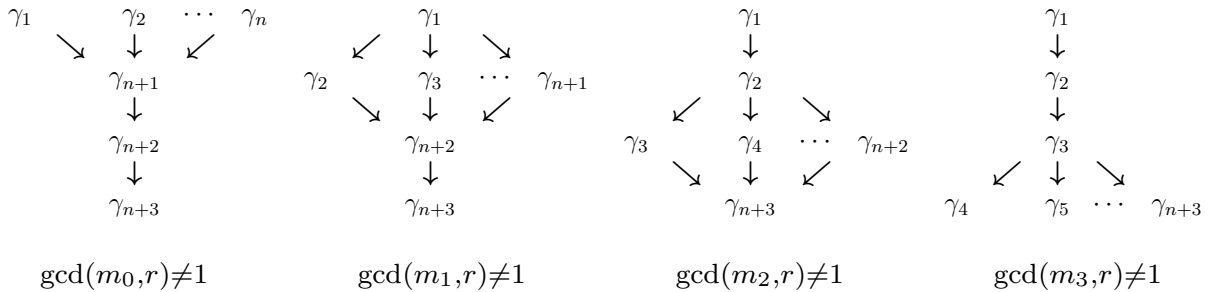

\resizebox{\columnwidth}{!}{
$
\begin{matrix}
\gamma_1 & & \gamma_2 & \cdots & \gamma_{n} \\
 & \boldsymbol{\searrow} & \boldsymbol{\downarrow}& \boldsymbol{\swarrow} &\\
& & \gamma_{n+1} & & \\
& & \boldsymbol{\downarrow} & & \\
& & \gamma_{n+2} & &\\
 & & \boldsymbol{\downarrow} & &\\
& & \gamma_{n+3} & & 
\end{matrix}
\ \ \ \
\begin{matrix}
& & \gamma_{1} & & \\
& \boldsymbol{\swarrow}  & \boldsymbol{\downarrow}  & \boldsymbol{\searrow} & \\
\gamma_{2} & & \gamma_3 & \cdots & \gamma_{n+1} \\
& \boldsymbol{\searrow} & \boldsymbol{\downarrow}  & \boldsymbol{\swarrow} & \\
& & \gamma_{n+2} & &  \\
& & \boldsymbol{\downarrow}  & & \\
& & \gamma_{n+3} & & \\
\end{matrix}
\ \ \ \
\begin{matrix}
& & \gamma_{1} & & \\
& & \boldsymbol{\downarrow}  & & \\
& & \gamma_{2} & & \\
& \boldsymbol{\swarrow}  & \boldsymbol{\downarrow}  & \boldsymbol{\searrow} & \\
\gamma_{3} & & \gamma_4 & \cdots & \gamma_{n+2} \\
& \boldsymbol{\searrow} & \boldsymbol{\downarrow}  & \boldsymbol{\swarrow} & \\
& & \gamma_{n+3} & & 
\end{matrix}
\ \ \ \
\begin{matrix}
& & \gamma_1 & & \\
& & \boldsymbol{\downarrow} & & \\
& & \gamma_2 & &\\
 & & \boldsymbol{\downarrow} & &\\
& & \gamma_3 & & \\ 
& \boldsymbol{\swarrow} & \boldsymbol{\downarrow}  & \boldsymbol{\searrow} &\\ 
\gamma_4 & & \gamma_5 & \cdots & \gamma_{n+3}
\end{matrix}
$}
\vspace{0.3cm}
\\
\resizebox{\columnwidth}{!}{$
\ \ \ {\scriptstyle \gcd(m_0,r)\neq 1} \ \ \ \ \ \ \ \ \ \ \ {\scriptstyle \gcd(m_1,r)\neq 1} \ \ \ \ \ \ \ \ \ \ \ {\scriptstyle \gcd(m_2,r)\neq 1} \ \ \ \ \ \ \ \ \ \ \ {\scriptstyle \gcd(m_3,r)\neq 1 } \ \ \
$}
\label{Fig:Component}
\caption{Component graphs of 7-dimensional quantum lens spaces.}
\end{figure}

Let $\mathcal{P}=\{1,2,...,n+3\}$ and $k$ be such that $\gcd(m_{k-1},r)=n$. We define a partial order, $\preceq$, on $\mathcal{P}$ by:
\begin{itemize}
    \item $k-1\succeq...\succeq 1$,
    \item $i\succeq k-1$ for $i=k,..,n+k-1$,
    \item $n+k\succeq i$ for $i=k,...,n+k-1$,
    \item $n+3 \succeq ... \succeq n+k$. 
\end{itemize}
The partial order satisfies that if $i\preceq j$ then $i\leq j$ which is the required assumption \cite[Assumption 4.3]{errs}.
It can easily be seen that there exists an order reversing isomorphism $\gamma_{B_{L_7^{r;\underline{m}}}}:\mathcal{P}\to \Gamma_{L_7^{r;\underline{m}}}$ mapping $i$ to $\gamma_i$. 

$SL_{\mathcal{P}}(\mathbf{1},\Z)$ is defined as the set of upper triangular matrices, $A$, with $1$ on the diagonal and which satisfies 
\[
a_{ij}\neq 0 \Rightarrow i\preceq j.
\]

$SL_{\mathcal{P}}$-equivalence simplifies in this case, since the block structure consists of $1\times 1$ matrices. Hence working with $SL_{\mathcal{P}}$-equivalence becomes a linear problem. Note that $SL_{\mathcal{P}}(\mathbf{1},\Z)$ is a group under matrix multiplication. 

\subsection{A classification result}
For a fixed $n$ we get four different classes of quantum lens spaces, one for each $i=0,1,2,3$ for which $\gcd(m_i,r)\neq 1$. By the discussion above $\text{Prime}_\gamma(C^*(L_7^{r;\underline{m}}))$ and $\text{Prime}_\gamma(C^*(L_7^{r;\underline{n}}))$ cannot be homeomorphic if $\gcd(m_i,r)\neq \gcd(n_i,r)$ for any $i$, since the structure of the ideals will be different. We will in this paper determine when two quantum lens spaces inside each of these four classes are isomorphic. 
Similarly we have three different classes of quantum lens spaces of dimension $5$ to investigate. 

To determine whether two quantum lens spaces in the same class are isomorphic we will make use of \cite[Theorem 7.1]{errs}. Since we are dealing with type I/postliminal graph $C^*$-algebras i.e. no vertex supports two distinct return paths, \cite[Lemma 7.1]{errs}, we can use the following classification result:

\begin{theorem}\cite[Theorem 7.1]{errs}
Let $E$ and $F$ be finite graphs which have no vertices supporting two distinct return paths. If $(B_E,B_F)$ is in standard form, then $C^*(E)$ and $C^*(F)$ are stably isomorphic if and only if there exist matrices $U,V\in SL_{\mathcal{P}}(\mathbf{1},\Z)$ such that $U{B_E}_{\curlywedge}V={B_F}_{\curlywedge}$. 
\end{theorem}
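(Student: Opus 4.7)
My plan is to deduce this from the general reduced filtered $K$-theory classification \cite[Theorem 6.1]{errs}, which says that for finite graphs $E,F$, $C^*(E)$ and $C^*(F)$ are stably isomorphic if and only if their reduced filtered $K$-theories are order-isomorphic over $\text{Prime}_\gamma$. The task is therefore to show that, under the type I hypothesis and once $(B_E,B_F)$ is in standard form, the reduced filtered $K$-theoretic invariant is captured exactly by the matrix $B_E^{\curlywedge}$ up to the group action $(U,V)\cdot M = U M V$ with $U,V\in SL_{\mathcal{P}}(\mathbf{1},\Z)$.

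The first step is to exploit the type I assumption: since no vertex supports two distinct return paths, every element of $\Gamma_E$ is either a singleton cycle or a singular vertex, so each subquotient associated with $\gamma \in \Gamma_E$ has $K_0$-group isomorphic to $\Z$ (or to zero in the singular case treated by the $\curlywedge$-decoration) and vanishing $K_1$. Consequently, the filtered $K$-theory collapses to a finite diagram of $\Z$'s indexed by $\mathcal{P}\cong\Gamma_E$ together with the connecting maps encoded by the off-diagonal entries of $B_E$. The standard form assumption ensures that this indexing is compatible with $\preceq$, which is what lets us package the data into a single upper-triangular matrix $B_E^{\curlywedge}$ respecting the order.

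Next I would verify each direction. For the ``if'' direction, given $U B_E^{\curlywedge} V = B_F^{\curlywedge}$ with $U,V\in SL_{\mathcal{P}}(\mathbf{1},\Z)$, I would read the equation as giving a pair of order-preserving automorphisms of the free abelian groups underlying domain and codomain of $B_E^{\curlywedge}$, and use the snake lemma applied level-by-level along the partial order $\preceq$ to assemble these into an isomorphism of the filtered $K$-theory; then \cite[Theorem 6.1]{errs} yields stable isomorphism. For the ``only if'' direction, stable isomorphism gives an order isomorphism of filtered $K$-theory by \cite[Theorem 6.1]{errs}; this isomorphism acts on each $\Z$-component by $\pm 1$ (determined up to sign by compatibility with the partial order), and lifting these to unimodular bases compatible with $\preceq$ produces $U,V\in SL_{\mathcal{P}}(\mathbf{1},\Z)$ with the required property after possibly absorbing signs into the standard form normalization.

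The main obstacle is the bookkeeping for the second direction: extracting genuine $SL_{\mathcal{P}}$ matrices (with $+1$ on the diagonal and support only on $\preceq$-comparable pairs) from an abstract isomorphism of filtered $K$-theory. Showing that only determinant $+1$ changes of basis arise, and that no entries are forced outside the support permitted by $\preceq$, requires carefully tracking how the cofiber sequences associated to the chain of ideals interact with the block-triangular shape of $B_E^{\curlywedge}$ in standard form; this is precisely the content that the ``standard form'' hypothesis is designed to enforce, and it is the place where the restriction to the type I setting (with $1\times 1$ blocks, as observed in the preceding subsection) makes the argument purely linear and unimodular.
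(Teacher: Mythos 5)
This statement is \cite[Theorem 7.1]{errs}, quoted as an external result; the paper gives no proof of it, so there is nothing internal to compare your argument against. Judged on its own, your sketch follows the natural route (reduce to the reduced filtered $K$-theory classification and translate that invariant into $SL_{\mathcal{P}}$-equivalence of ${B_E}_{\curlywedge}$), but it has a genuine error at exactly the point where the difficulty of the theorem is concentrated. You assert that each simple subquotient attached to a component $\gamma\in\Gamma_E$ has $K_0\cong\Z$ and \emph{vanishing} $K_1$. That is false for the components which are cycles: the corresponding simple subquotient is stably isomorphic to $C(\Tb)\otimes\K$, whose $K_1$ is $\Z$. This is not cosmetic, because the $K_1$-groups and the induced action on them are precisely what pin the diagonal entries of $U$ and $V$ to $+1$ and force $SL_{\mathcal{P}}$- rather than $GL_{\mathcal{P}}$-equivalence. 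With $K_1=0$ everywhere, your ``only if'' direction could at best produce $GL_{\mathcal{P}}$ matrices, and the step where you ``absorb signs into the standard form normalization'' is exactly the claim that needs proof; it is not available in general, which is why the theorem is stated with $SL_{\mathcal{P}}$ and restricted to graphs in standard form.

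A second gap: in the ``if'' direction you invoke the snake lemma ``level-by-level'' to turn $U{B_E}_{\curlywedge}V={B_F}_{\curlywedge}$ into an isomorphism of the filtered invariant. The reduced filtered $K$-theory is not merely a diagram of cokernels of off-diagonal entries; it consists of the $K_0$- and $K_1$-groups of the gauge-invariant ideals and subquotients together with the maps in the associated six-term exact sequences, and one must check that the maps induced on cokernels and kernels of the various blocks of ${B_E}_{\curlywedge}$ are compatible with all of these boundary maps simultaneously. That verification is the technical core of the cited work and cannot be replaced by a one-line appeal to the snake lemma. To complete the argument you would need to reproduce the explicit description in \cite{errs} of how an $SL_{\mathcal{P}}$-equivalence induces an isomorphism of the full invariant, and conversely how an isomorphism of the invariant is lifted to matrices over $\Z$ respecting the order $\preceq$.
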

${B_E}_{\curlywedge}$ is obtained from the graph, where a loop has been added to all sinks in $E$. $(B_E,B_F)$ being in standard form means that the adjacency matrices have the same size and block structure, moreover they must also have the same temperatures, see \cite[Definition. 4.22]{errs} for a precise definition. When $(B_E,B_F)$  is in standard form we can work with $SL_{\mathcal{P}}$-equivalence instead of working directly with the reduced filtered K-theory, which is often more complicated to determine. For the quantum lens spaces we are investigating, the block structure consists of $1\times 1$-matrices, hence  $(B_{L_7^{r;\underline{m}}},B_{L_7^{r;\underline{n}}})$ is in standard form for two quantum lens spaces in the same class.

Since $L_{2n+1}^{r;\underline{m}}$ contains no sinks and two quantum lens spaces are stably isomorphic if and only if they are isomorphic by \cite[Proposition 14.8]{errs2}, the above theorem boils down to: 
\begin{corollary}\label{lensclassification}
If $\gcd(m_i,r)=\gcd(n_i,r)$ for each $i=0,1,2,3$ then $C^*(L_7^{r;\underline{m}})$ and $C^*(L_7^{r;\underline{n}})$ are isomorphic if and only if there exists matrices $U,V\in SL_{\mathcal{P}}(\mathbf{1},\Zb)$ such that $UB_{L_7^{r;\underline{m}}}V=B_{L_7^{r;\underline{n}}}$. 
\end{corollary}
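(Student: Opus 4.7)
The plan is to apply Theorem 7.1 of \cite{errs} directly and then remove the passage through stable isomorphism. The corollary is essentially the specialisation of that general classification theorem to the quantum lens space graphs, so the real task is to verify that the hypotheses of the theorem hold automatically in our setting under the assumption $\gcd(m_i,r)=\gcd(n_i,r)$ for $i=0,1,2,3$.

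First I would check that $(B_{L_7^{r;\underline{m}}},B_{L_7^{r;\underline{n}}})$ is in standard form. The hypothesis $\gcd(m_i,r)=\gcd(n_i,r)$ ensures that $L_7^{r;\underline{m}}$ and $L_7^{r;\underline{n}}$ have the same number of vertices at each of the four levels, so the adjacency matrices have the same size. As noted in the preceding discussion of $SL_{\mathcal{P}}$-equivalence, every strongly connected component of $L_7^{r;\underline{m}}$ is a singleton (each vertex supports exactly one loop and there are no other return paths), so the block decomposition consists entirely of $1\times 1$ blocks, and the same holds for $L_7^{r;\underline{n}}$. The temperatures, in the sense of \cite[Definition 4.22]{errs}, are determined vertex by vertex from the loop data; since every vertex in either graph is the base of exactly one loop, the two collections of temperatures agree. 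Hence the pair is in standard form.

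Next I would observe that ${B_E}_{\curlywedge}=B_E$ for $E=L_7^{r;\underline{m}}$ and $E=L_7^{r;\underline{n}}$, because neither graph has any sinks (every vertex carries a loop, hence is regular and emits an edge). With these two observations in place, Theorem 7.1 of \cite{errs} yields that $C^*(L_7^{r;\underline{m}})$ and $C^*(L_7^{r;\underline{n}})$ are \emph{stably} isomorphic if and only if there exist $U,V\in SL_{\mathcal{P}}(\mathbf{1},\Zb)$ with $UB_{L_7^{r;\underline{m}}}V=B_{L_7^{r;\underline{n}}}$. To upgrade from stable isomorphism to isomorphism, I would invoke \cite[Proposition 14.8]{errs2}: the graphs $L_{2n+1}^{r;\underline{m}}$ have no sinks, so the associated $C^*$-algebras contain a full projection (in fact the unit) and stable isomorphism coincides with isomorphism in this class. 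This gives the corollary.

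There is no real obstacle; the content of the result lies entirely in the cited theorems, and my work is just to assemble the standard-form and no-sink observations specific to the quantum lens space graphs. The only mildly delicate point is the verification of matching temperatures, but this is immediate from the uniformity of the loop structure throughout $L_7^{r;\underline{m}}$.
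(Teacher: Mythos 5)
Your proposal is correct and follows essentially the same route as the paper: verify that the pair of matrices is in standard form (same size from the matching gcd's, $1\times 1$ blocks, matching temperatures), note that ${B_E}_{\curlywedge}=B_E$ since the graphs have no sinks, apply \cite[Theorem 7.1]{errs}, and upgrade stable isomorphism to isomorphism via \cite[Proposition 14.8]{errs2}. Your verification of the temperature condition is slightly more explicit than the paper's, but the argument is the same.
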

For dimension $5$ we have a similar result by letting $\mathcal{P}=\{1,2,...,n+2\}$ and defining the order in a similar way as the one for dimension $7$. \\

Eilers, Restorff, Ruiz and Sørensen used Corollary \ref{lensclassification}, with $\mathcal{P}=\{1,2,3,4\}$ ordered linearly to completely classify the simplest case: 
\begin{theorem}\label{errsLens}\cite[Theorem 7.8]{errs}
Let $r\in \Nb$, $r\geq 2$ and let $\underline{m}=(m_0,m_1,m_2,m_3)$ and $\underline{n}=(n_0,n_1,n_2,n_3)$ be in $\Nb^4$ such that $\gcd(m_i,r)=\gcd(n_i,r)=1$ for all i. Then $C^*(L_7^{(r,\underline{m})})\cong C^*(L_7^{(r,\underline{n}}))$ if and only if 
\[(n_2^{-1}n_1-m_2^{-1}m_1)\frac{r(r-1)(r-2)}{3}\equiv 0 \Mod{r}.\]
\end{theorem}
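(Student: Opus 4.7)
The plan is to apply Corollary~\ref{lensclassification}, which reduces the question to deciding when there exist $U,V\in SL_{\mathcal{P}}(\mathbf{1},\Z)$ with $UB_{L_7^{r;\underline{m}}}V = B_{L_7^{r;\underline{n}}}$. In the present coprime case every vertex $v_i^b$ collapses to a single $v_i^0$ at each level, so $\mathcal{P}=\{1,2,3,4\}$ is linearly ordered and $SL_{\mathcal{P}}(\mathbf{1},\Z)$ is just the group of upper-triangular $4\times 4$ integer matrices with ones on the diagonal. Both implications of the if-and-only-if will follow once the orbit of $B_{L_7^{r;\underline{m}}}$ under $(U,V)\mapsto UB V$ is described by a single $\Z/r\Z$-valued invariant.

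First I would write $B_{L_7^{r;\underline{m}}}$ explicitly. Since each of the four vertices of $L_7^{r;\underline{m}}$ supports exactly one loop (corresponding to the unique admissible closed path of length $r$ at each level when $\gcd(m_i,r)=1$), the diagonal of the adjacency matrix is all ones, so $B_{L_7^{r;\underline{m}}}$ is strictly upper triangular. For $i<j$ the $(i,j)$-entry is the number of admissible paths from $(v_{i-1},0)$ to $(v_{j-1},0)$ in the skew product graph $L_7\times_{\underline{m}}\Z_r$; parametrising such a path by the number of loops $(a_1,\dots,a_{j-i-1})$ taken at the intermediate levels reduces the count to the number of tuples satisfying the linear congruence coming from "land at $0$ at the endpoint", subject to the admissibility constraint that each partial sum at an intermediate level is nonzero modulo $r$. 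Under coprimality these avoidance conditions become integer range restrictions, and each entry becomes an explicit function of $r$ and the $m_i$.

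Next I would analyse the action $B\mapsto UBV$ directly. Expanding the product with $U,V$ upper-triangular-unipotent shows that the first superdiagonal entries $(1,2),(2,3),(3,4)$ are absolute invariants, the second superdiagonal entries $(1,3),(2,4)$ are invariant modulo the subgroup of $\Z$ generated by the two adjacent first-superdiagonal entries, and the $(1,4)$ entry is invariant modulo the subgroup generated by all five other off-diagonal entries together with a mixed term of the form $u_{12}a_{23}v_{34}$. The only new invariant carried by $B_{L_7^{r;\underline{m}}}$ beyond the structural information already encoded in the $\gcd(m_i,r)$ therefore sits in the $(1,4)$ slot, viewed modulo what the other five entries generate. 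Reducing everything modulo $r$ and using that each $m_i$ is invertible in $\Z/r\Z$ should collapse this invariant to a single scalar in $\Z/r\Z$ expressible in the $m_i$.

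The main obstacle is the third step: carrying out the path-count in closed form and then chasing it through the elementary row and column operations to exhibit the invariant in its cleanest shape. The combinatorial factor $\tfrac{r(r-1)(r-2)}{3}=2\binom{r}{3}$ is expected to appear as a three-index sum $\sum \binom{k}{2}$ arising from the loop counts at two successive intermediate levels, and the rational inverse $m_2^{-1}$ should surface when the shift-by-$m_2$ permutation of $\Z/r\Z$ is inverted in order to reindex that sum into the standard form. Once the $(1,4)$-invariant is pinned down as $m_2^{-1}m_1\cdot\tfrac{r(r-1)(r-2)}{3}\pmod r$ (up to a term coming from the other entries that is the same for $\underline{m}$ and $\underline{n}$), the equivalence of $B_{L_7^{r;\underline{m}}}$ and $B_{L_7^{r;\underline{n}}}$ becomes exactly the stated congruence $(n_2^{-1}n_1-m_2^{-1}m_1)\tfrac{r(r-1)(r-2)}{3}\equiv 0\pmod r$, proving both directions at once.
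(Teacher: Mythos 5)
Your strategy is exactly the one this paper relies on for Theorem \ref{errsLens} (which is quoted from \cite[Theorem~7.8]{errs} without proof) and then reproduces for its own results in Sections \ref{adjacencymatrix} and \ref{invariant}: reduce to $SL_{\mathcal{P}}$-equivalence via Corollary \ref{lensclassification} with $\mathcal{P}=\{1,2,3,4\}$ linearly ordered, observe that the strictly upper triangular matrix $B_{L_7^{r;\underline{m}}}$ has weight-independent entries everywhere except the $(1,4)$ corner, and read off the classification from the orbit of that corner under $B\mapsto UBV$. Your structural analysis of the orbit is essentially right: the first superdiagonal is fixed, and since the $(1,3)$ and $(2,4)$ entries equal $\frac{r(r+1)}{2}$ for every coprime weight vector, matching them forces $v_{23}=-u_{12}$ and $u_{23}=-v_{34}$ while leaving $u_{12},u_{13},v_{24},v_{34}$ free, so the corner entry can be shifted precisely by the subgroup $r\Zb+\frac{r(r+1)}{2}\Zb$.

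The gap is that the step carrying all of the content of the theorem is not performed. You never compute the number of $3$-step admissible paths (the $(1,4)$ entry $x$), so the specific form of the invariant --- that the weight-dependence of $x'-x$ is $\bigl(n_2^{-1}n_1-m_2^{-1}m_1\bigr)\frac{r(r-1)(r-2)}{3}$ rather than, say, an expression involving $m_3$ or a different binomial factor --- is asserted (``is expected to appear'') rather than derived; this count is exactly the computation that occupies \cite[Lemma 7.6]{errs} and its analogues in Section \ref{adjacencymatrix} here, and without it neither direction of the equivalence is established. A second, smaller issue: the subgroup of achievable shifts $r\Zb+\frac{r(r+1)}{2}\Zb$ equals $\frac{r}{2}\Zb$ when $r$ is even, so your plan of ``reducing everything modulo $r$'' does not immediately yield the stated criterion; one must track $x'-x$ as an integer (or at least modulo $\gcd\bigl(r,\frac{r(r+1)}{2}\bigr)$) and then check that membership in this subgroup is equivalent to the stated mod-$r$ congruence, which ultimately comes down to the observation that $\frac{r(r-1)(r-2)}{3}$ is congruent to $0$ or to $\frac{2r}{3}$ modulo $r$ according to whether $3$ divides $r$.
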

From the above they concluded: 
\begin{corollary}\cite[Corollary 7.9]{errs}
If $3$ does not divide $r$ then 
\[
C^*(L_7^{(r,\underline{m})})\cong C^*(L_7^{(r,(1,1,1,1))})
\]
for all $\underline{m}\in \Nb^4$ with $\gcd(m_i,r)=1$. 

If $3$ divides $r$ and $\underline{m}=(m_0,m_1,m_2,m_3)\in\Nb^4$ with $\gcd(m_i,r)=1$ then 
\begin{itemize}
    \item[(i)] $C^*(L_7^{(r,\underline{m})})\cong C^*(L_7^{(r,(1,1,1,1))})$ if and only if $m_1\equiv m_2 \Mod{3}$, 
    \item[(ii)] $C^*(L_7^{(r,\underline{m})})\cong C^*(L_7^{(r,(1,1,r-1,1))})$ if and only if $m_1\not\equiv m_2 \Mod{3}$. 
\end{itemize}
\end{corollary}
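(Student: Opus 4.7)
The plan is to deduce the corollary directly from Theorem \ref{errsLens} by analyzing the arithmetic condition
\[
(n_2^{-1}n_1 - m_2^{-1}m_1)\,\frac{r(r-1)(r-2)}{3}\equiv 0 \Mod{r}.
\]
First I would observe that among the three consecutive integers $r,r-1,r-2$ exactly one is divisible by $3$, so $\frac{r(r-1)(r-2)}{3}$ is always a well-defined integer. The entire proof then splits on whether or not $3$ divides $r$.

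\textbf{Case $3 \nmid r$.} Here the factor of $3$ in the denominator must come from $(r-1)(r-2)$, so $\frac{r(r-1)(r-2)}{3}=r\cdot\frac{(r-1)(r-2)}{3}$ is already a multiple of $r$. The congruence in Theorem \ref{errsLens} then holds vacuously for any choice of $\underline{m},\underline{n}$, and in particular comparing with $\underline{n}=(1,1,1,1)$ shows $C^*(L_7^{(r,\underline{m})})\cong C^*(L_7^{(r,(1,1,1,1))})$.

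\textbf{Case $3 \mid r$.} Write $r=3s$. Since $r\equiv 0 \Mod 3$, $(r-1)(r-2)\equiv (-1)(-2)=2\Mod 3$, so $(r-1)(r-2)=3t+2$ for some integer $t$, and consequently
\[
\frac{r(r-1)(r-2)}{3}=s(r-1)(r-2)=rt+2s\equiv 2s \Mod{r}.
\]
Hence the condition becomes $(n_2^{-1}n_1-m_2^{-1}m_1)\cdot 2s\equiv 0\Mod{3s}$, which is equivalent to $n_2^{-1}n_1\equiv m_2^{-1}m_1 \Mod{3}$ (dividing by $s$ and using that $\gcd(2,3)=1$). Taking $\underline{n}=(1,1,1,1)$ gives the condition $m_1\equiv m_2 \Mod{3}$, which proves (i). For (ii), taking $\underline{n}=(1,1,r-1,1)$ yields $n_2^{-1}n_1\equiv (-1)^{-1}\equiv 2 \Mod 3$, so the condition becomes $m_1\equiv 2m_2\Mod 3$, i.e.\ $m_1\not\equiv m_2 \Mod 3$, where I use that $3\mid r$ together with $\gcd(m_i,r)=1$ forces $m_1,m_2\in\{1,2\}\Mod 3$ so the two alternatives $m_1\equiv m_2$ and $m_1\not\equiv m_2$ exhaust all possibilities.

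The only real subtlety is the modular simplification in Case~$3\mid r$: one must be careful that after cancelling $s$, the condition $2x\equiv 0\Mod 3$ is indeed equivalent to $x\equiv 0\Mod 3$, and that the two witnesses $(1,1,1,1)$ and $(1,1,r-1,1)$ realise the two possible residue classes, ensuring no further isomorphism classes appear. Everything else is a direct substitution into Theorem \ref{errsLens}.
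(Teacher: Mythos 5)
Your proof is correct and follows exactly the intended route: the paper states this corollary as an immediate consequence of Theorem \ref{errsLens} (quoting \cite{errs}), and your reduction of $\frac{r(r-1)(r-2)}{3}\bmod r$ to $0$ when $3\nmid r$ and to $\frac{2r}{3}$ when $3\mid r$, followed by substituting the two witnesses $(1,1,1,1)$ and $(1,1,r-1,1)$, is precisely that derivation. The modular bookkeeping (cancelling $s$, self-inverses mod $3$, exhaustiveness of the two residue classes) is all handled correctly.
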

For dimension less than $7$, Eilers, Restorff, Ruiz and Sørensen observed that the adjacency matrices are independent of the weights, hence all quantum lens spaces are isomorphic. We will see that this is not always the case when one of the weights is not coprime with $r$. 

\section{Classification of $C(L_{2n+1}(r;\underline{m})), n\leq 3$}\label{classification}
For a fixed value of the order of the acting group, $r$, quantum lens spaces of dimension $3$, with one and only one weight coprime with $r$, will be the same for any choice of weights. See remark \ref{dim35}. For dimension $5$ we obtain the following:
\begin{theorem}\label{5dimensional}
Let $r\in \Nb$, $r\geq 2$ and let $\underline{m}=(m_0,m_1,m_2)$ and $\underline{n}=(n_0,n_1,n_2)$ be in $\mathbb{N}^3$ such that $\gcd(r,m_k)=\gcd(r,n_k)=n$ for one $0\leq k \leq 2 $, and $\gcd(r,m_i)=\gcd(r,n_i)=1$ whenever $i\neq k$. Then 
\begin{itemize}
    \item[(i)] For $k=1$, $C^*(L_5^{(r,\underline{m})})\cong C^*(L_5^{(r,\underline{n})})$, 
    \item[(ii)] For $k=0$ or $k=2$, $C^*(L_5^{(r,\underline{m})})\cong C^*(L_5^{(r,\underline{n})})$ if and only if $m_1\equiv n_1 \Mod{n}$. 
\end{itemize}
\end{theorem}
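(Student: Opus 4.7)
The plan is to apply the dimension-$5$ analogue of Corollary \ref{lensclassification}, which reduces the isomorphism question to the existence of matrices $U, V \in SL_{\mathcal{P}}(\mathbf{1},\Z)$ satisfying $U B_{L_5^{r;\underline{m}}} V = B_{L_5^{r;\underline{n}}}$, where $\mathcal{P} = \{1, 2, \ldots, n+2\}$ carries the partial order $\preceq$ defined in analogy with the dimension-$7$ case. The first step is to compute the adjacency matrices $B_{L_5^{r;\underline{m}}}$ by counting admissible paths in the skew-product graph $L_5 \times_{\underline{m}} \Z_r$, separately in each of the three cases $k = 0, 1, 2$. In each case the resulting $(n+2) \times (n+2)$ matrix has a block shape dictated by $\preceq$, with diagonal entries determined by the loops at each vertex and off-diagonal entries given by explicit combinatorial sums depending on $m_1$ modulo $r$ and $n$. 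This construction parallels the dimension-$7$ calculation in Section \ref{adjacencymatrix}, but is shorter since there are fewer levels.

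For case (i), the key observation is that when the non-coprime weight sits in the middle level, the partial order on $\mathcal{P}$ places each of the $n$ ``special'' indices strictly between the extreme indices $1$ and $n+2$. Consequently, there is room in both $U$ (in positions $(1,j)$ for $2 \le j \le n+1$) and $V$ (in positions $(i, n+2)$ for $2 \le i \le n+1$) to absorb the entire weight dependence via row and column operations. I would exhibit such $U$ and $V$ explicitly, transforming $B_{L_5^{r;\underline{m}}}$ to a canonical matrix independent of $\underline{m}$, which proves that all quantum lens spaces in this family are mutually isomorphic.

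For case (ii), the partial order is more restrictive: when the special indices sit at the top ($k=0$) or at the bottom ($k=2$), only one side of the transformation has full flexibility. The ``if'' direction is handled by constructing explicit $U, V$ under the assumption $m_1 \equiv n_1 \Mod{n}$, essentially reducing the cancellations to those of case (i) together with a diagonal correction which is now integral precisely because of the congruence. For the ``only if'' direction, I would isolate a linear combination of entries of $B_{L_5^{r;\underline{m}}}$ which is preserved by every admissible $SL_{\mathcal{P}}$-transformation (i.e., stable under left multiplication by upper-triangular unipotent $U$ respecting $\preceq$ and right multiplication by the analogous $V$). This quantity should, modulo $n$, record $m_1 \Mod{n}$, so that $m_1 \not\equiv n_1 \Mod{n}$ obstructs $SL_{\mathcal{P}}$-equivalence.

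The main obstacle is the first step combined with the invariant-identification: counting the admissible paths when one level carries $n$ vertices arranged in an $n$-cycle requires careful bookkeeping, because the rule forbidding passage through certain vertices at the non-coprime level interacts with the modular indexing inherited from $\Z_r$. Once the matrices are in hand, the $SL_{\mathcal{P}}$-reduction is linear algebra, but in case (ii) one still has to pinpoint the right combination of entries whose residue mod $n$ is $SL_{\mathcal{P}}$-invariant and reproduces exactly $m_1 \Mod{n}$; any other candidate might be altered by a permissible transformation, so this last verification is the most delicate part of the argument.
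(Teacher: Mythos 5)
Your proposal follows essentially the same route as the paper: compute the $(n+2)\times(n+2)$ adjacency matrices by counting admissible paths (the paper records these in Remark \ref{dim35}) and then run the $SL_{\mathcal{P}}$-equivalence analysis of Section \ref{invariant} in the easier three-level setting, extracting $m_1 \bmod n$ as the invariant when $k=0$ or $k=2$. The only point worth flagging is that case (i) is even simpler than you anticipate: for $k=1$ the adjacency matrix is already entirely independent of the weights, so no nontrivial $U,V$ are needed to ``absorb'' anything.
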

We will now state our main theorem for quantum lens spaces of dimension $7$, which is an extension of Theorem \ref{errsLens}.
\begin{theorem}
\label{Thm:Main}
Let $r\in \Nb$, $r\geq 2$ and let  $\underline{m}=(m_0,m_1,m_2,m_3)$ and $\underline{n}=(n_0,n_1,n_2,n_3)$ be in $\mathbb{N}^4$ such that $\gcd(r,m_k)=\gcd(r,n_k)=n$ for one $0\leq k \leq 3 $, and $\gcd(r,m_i)=\gcd(r,n_i)=1$ whenever $i\neq k$. Then $C^*(L_7^{(r,\underline{m})})$ is isomorphic to $C^*(L_7^{(r,\underline{n})})$ if and only if
\begin{enumerate}
    \item $\left(n_2^{-1}n_1-m_2^{-1}m_1 \right)\frac{r(r-1)(r-2)}{3}\equiv 0\Mod{r}$ and $m_j\equiv n_j \pmod{n}, j=1,2$ if $k=0$ or $k=3$,
    \item $\left(n_2^{-1}n_1-m_2^{-1}m_1 \right)\frac{r(r-1)(r-2)}{3}\equiv 0\Mod{r}$ and $m_2\equiv n_2 \pmod{n}$ if $k=1$,
    \item $\left(n_2^{-1}n_1-m_2^{-1}m_1 \right)\frac{r(r-1)(r-2)}{3}\equiv 0\Mod{r}$ and $m_1\equiv n_1 \pmod{n}$ if $k=2$.
\end{enumerate}
\end{theorem}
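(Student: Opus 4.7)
My plan is to apply Corollary \ref{lensclassification} directly: the isomorphism question reduces to deciding when $B_{L_7^{r;\underline{m}}}$ and $B_{L_7^{r;\underline{n}}}$ are $SL_{\mathcal{P}}$-equivalent, and since the blocks of $\mathcal{P}$ are $1\times 1$, this becomes a linear-algebraic question about left and right multiplication by upper-triangular unipotent matrices whose zero pattern is dictated by the partial order $\preceq$. Thus the proof splits cleanly into three tasks: (i) produce the adjacency matrices $B_{L_7^{r;\underline{m}}}$ in each case $k\in\{0,1,2,3\}$, (ii) show that the stated congruences are invariants under this restricted equivalence, and (iii) realize every admissible change of the data by explicit $U,V\in SL_{\mathcal{P}}(\mathbf{1},\Z)$.

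For step (i), I would carry out the counting of admissible paths between the collapsed vertices $v_1,\ldots,v_{n+3}$ of $L_7^{r;\underline{m}}$ in Section \ref{adjacencymatrix}. The structure of $B$ is that of a four-level upper-triangular array in which the coprime levels produce entries of size $1\times 1$ whose values depend on the $m_i$ via the familiar Vaksman--Soibelman path counts, while the level with $\gcd(m_{k-1},r)=n$ contributes an $n\times n$ block arising from the $n$-cycle at that level. The shape of the $B$-matrix is genuinely different in the four cases because the special level can appear at the top, bottom, or in either middle position, and each configuration has to be written out on its own terms.

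For step (ii), I would isolate the two kinds of invariants. The expression $(n_2^{-1}n_1-m_2^{-1}m_1)\tfrac{r(r-1)(r-2)}{3}\pmod r$ is the ERRS invariant from Theorem \ref{errsLens}; it arises from the coprime portion of the graph, and by essentially the same argument as in \cite{errs} it survives every allowed row/column move on $B$, since those moves touch the non-special entries in the same way as in the fully coprime case. The new congruences $m_j\equiv n_j\pmod n$ come from the special level: the $n$-cycle at that level produces entries reducible only modulo $n$ under $SL_{\mathcal{P}}$-operations, so weights of the levels incident to the special level are locked modulo $n$. The three-case split in the theorem reflects which levels are incident to the special one: for $k=0$ or $k=3$ the special level is an endpoint and both adjacent levels ($m_1$ and $m_2$) contribute, whereas for $k=1$ only $m_2$ is pinned and for $k=2$ only $m_1$ is pinned.

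For step (iii), I would construct, for each case, explicit matrices $U,V\in SL_{\mathcal{P}}(\mathbf{1},\Z)$ realizing any modification of $\underline{m}$ that preserves the invariants, using the inverse formula $m_2^{-1}m_1$ modulo $r$ to clear the ERRS-type entry exactly as in \cite[Proof of Theorem 7.8]{errs} and using the divisibility of the remaining differences by $n$ to clear the special-level residues. The main obstacle will be step (iii): the $SL_{\mathcal{P}}$-reduction has to be performed case by case with careful bookkeeping of which entries of $B$ are changed by which unipotent move, and the four adjacency-matrix geometries do not combine into a single uniform calculation. This is precisely why the authors announce that the long calculations are relegated to Section \ref{invariant}; my plan is to organize these computations one case at a time, verifying both necessity and sufficiency of the listed invariants before assembling the final statement.
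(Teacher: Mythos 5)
Your plan reproduces the paper's architecture (reduce to $SL_{\mathcal{P}}$-equivalence via Corollary \ref{lensclassification}, compute the four adjacency matrices, extract invariants, then realize equivalences explicitly), but as a proof it has genuine gaps: essentially all of the mathematical content of your steps (ii) and (iii) is asserted rather than carried out, and the assertions are not self-evidently true. The claim that the ERRS expression ``survives every allowed row/column move by essentially the same argument as in \cite{errs}'' hides the actual work: in the paper one must sum the $n$ entries $x_0+\cdots+x_{n-1}$ of the special column and verify, after nontrivial cancellation of the double sums over residue classes, that this sum is congruent to $-m_2^{-1}m_1\tfrac{r(r-1)(r-2)}{3}$ modulo $r$ and is carried to the corresponding sum for $\underline{n}$ by the unipotent moves. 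Likewise, extracting $m_1\equiv n_1\pmod{n}$ in case (1) requires forming $(x_0'-x_t')-(x_0-x_t)$, eliminating the unknown entries $u_{12},u_{23},v_{3,t+4}$ (using, e.g., $v_{3,t+4}=-u_{23}/n$), and then combining the instances $t=1$ and $t=n-1$ to isolate $\tfrac{r}{n}a_2^2m_2(a_1-a_1')\equiv 0\pmod r$. None of this is routine bookkeeping, and your outline gives no indication of how the $n$-block entries actually pin the residues.

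Two further points are concretely wrong or missing. First, your explanation of the case split --- that for $k=0,3$ ``both adjacent levels contribute'' while for $k=1,2$ only one does --- cannot be the mechanism: in $L_7^{r;\underline m}$ there are edges from every lower level to every higher level, so every level is incident to the special one in all four cases. The true source of the split is which of $m_1^{-1},m_2^{-1}\bmod n$ appear in entries that survive the allowed operations, and this only emerges from the explicit matrices of Lemmas \ref{Lemma:m0}--\ref{lemmam1}. Second, in cases (2) and (3) the congruence that falls out of the computation is
\[
\left( n_2^{-1}n_1-m_2^{-1}m_1 \right)\frac{r(2r-n)(r-n)}{3n}\equiv 0 \pmod{r},
\]
not the stated $\left(n_2^{-1}n_1-m_2^{-1}m_1\right)\tfrac{r(r-1)(r-2)}{3}\equiv 0\pmod r$; the paper needs a separate number-theoretic argument (splitting on whether $3\mid r$) to show the two conditions are equivalent, plus a parity argument to dispose of the term $\left(n_1^{-1}-m_1^{-1}\right)\tfrac{r(r-n)(n-1)}{4n}$. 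Your proposal does not anticipate either step, nor the anti-transpose trick the paper uses to reduce case (2) to case (3). So while your strategy is the right one, the proof as proposed is an outline with the decisive computations and at least one incorrect guiding heuristic still to be repaired.
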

The proof is postponed to section \ref{invariant}. From Theorem \ref{Thm:Main}, we may derive the following result. It is in particular interesting for computational purposes, and gives a precise determination of how many different spaces we obtain of each type.
\begin{corollary} Let $U_n$ and $U_r$ be the groups of units in $\Zb_n$ and $\Zb_r$ respectively. 
\begin{itemize}
\item[(1)] Let $\gcd(m_i,r)=n$ for $i=0$ or $i=2$ then 
\[
C^*(L_5^{(r;(m_0,m_1,m_2)}))\cong C^*(L_5^{(r;(n,k_1,1))})
\]
or
\[
C^*(L_5^{(r;(m_0,m_1,m_2)}))\cong C^*(L_5^{(r;(1,k_1,n))})
\]
respectively, where $m_1\equiv k_1 \Mod{n}$ with $k_1\in U_r$, and there are exactly $|U_n|$ isomorphism classes of quantum lens spaces. 
\item[(2)] If $3$ does not divide $r$ and $\gcd(m_i,r)=n$ for $i=0$ or $i=3$ then 
\[
C^*(L_7^{(r;(m_0,m_1,m_2,m_3)}))\cong C^*(L_7^{(r;(n,k_1,k_2,1))})
\]
and
\[
C^*(L_7^{(r;(m_0,m_1,m_2,m_3)}))\cong C^*(L_7^{(r;(1,k_1,k_2,n))})
\]
respectively, 
where $m_i\equiv k_i \Mod{n}$ with $k_i\in U_r$, and there are exactly $|U_n|^2$ isomorphism classes of quantum lens spaces. 

If $3$ divides $r$ we furthermore require that the $k_i$ satisfy 
\begin{itemize}
    \item[\tiny$\bullet$] $k_1\equiv k_2 \Mod{3}$ if $m_1\equiv m_2 \Mod{3}$
    \item[\tiny$\bullet$] $k_1\not\equiv k_2 \Mod{3}$ if $m_1\not\equiv m_2 \Mod{3}$.
\end{itemize}
In particular, there are $2|U_n|^2$ isomorphism classes if $3\not\vert n$ and  $|U_n|^2$ isomorphism classes if $3\vert n$. 
\item[(3)] If $3$ does not divide $r$ and $\gcd(m_1,r)=n$ then 
\[
C^*(L_7^{(r;(m_0,m_1,m_2,m_3)}))\cong C^*(L_7^{(r;(1,n,k_2,1))})
\]
where $m_2\equiv k_2 \Mod{n}$ with $k_2\in U_r$, and there are exactly $|U_n|$ isomorphism classes.

If $3$ divides $r$ we furthermore require: 
\begin{itemize}
    \item[\tiny$\bullet$] $k_2\equiv n \Mod{3}$ if $m_1\equiv m_2 \Mod{3}$
    \item[\tiny$\bullet$] $k_2\not\equiv n \Mod{3}$ if $m_1\not\equiv m_2 \Mod{3}$.
\end{itemize}
In particular, there are $2|U_n|$ isomorphism classes if $3\not\vert n$ and  $|U_n|$ isomorphism classes if $3\not\vert n$.

\item[(4)] If $3$ does not divide $r$ and $\gcd(m_2,r)=n$ then 
\[
C^*(L_7^{(r;(m_0,m_1,m_2,m_3)}))\cong C^*(L_7^{(r;(1,k_1,n,1))})
\]
where $m_1\equiv k_1 \Mod{n}$ with $k_1\in U_r$, and there are exactly $|U_n|$ isomorphism classes.

If $3$ divides $r$ we furthermore require
\begin{itemize}
    \item[\tiny$\bullet$] $k_1\equiv n \Mod{3}$ if $m_1\equiv m_2 \Mod{3}$
    \item[\tiny$\bullet$] $k_1\not\equiv n \Mod{3}$ if $m_1\not\equiv m_2 \Mod{3}$.
\end{itemize}
In particular, there are $2|U_n|$ isomorphism classes if $3\not\vert n$ and  $|U_n|$ isomorphism classes if $3\vert n$.
\end{itemize}
\begin{proof}
We only address the proof of (2) since the remaining follow by a similar approach.
If $3$ does not divide $r$ we notice that we by Theorem \ref{Thm:Main} only need to consider the condition $m_i\equiv n_i \Mod{n}$. It is clear that if $\gcd(n,m_i)\neq 1$, then this is also true for any integer equivalent to $m_i \Mod{n}$. It suffices to show that if $[k]_n\in U_n$, then $[k]_n$ contains an element, $k'$, such that $\gcd(k',r)=1$. Indeed, consider such a $k$. We set $p$ to be the product of $1$ and all prime factors of $r$ which are factors of neither $k$ nor $n$. Now set $k'\coloneqq np+k\equiv k \pmod{n}$ and assume that $\gcd(k',r)\neq 1$. Consider a common prime factor $q$ of $r$ and $k'$. Since $q$ divides $k'$ but only divides exactly one of $np$ and $k$ by construction, we have a contradiction, and $\gcd(k',r)=1$ as desired.

If $3$ divides $r$ then we also need to consider the first part of the invariant. We have 
\[
\frac{r(r-1)(r-2)}{3}\equiv \frac{2r}{3} \Mod{r}
\]
hence $3$ must divide $n_2^{-1}n_1-m_2^{-1}m_1$ to get isomorphic quantum lens spaces. It follows immediately that this is the case if $3$ divides $n$ since $m_i\equiv n_i \Mod{n}$ and hence $m_i\equiv n_i \Mod{3}$. If $3$ does not divide $n$ then it follows by the invariant that $m_1\equiv m_2 \Mod{3}$ if and only if $n_1\equiv n_2 \Mod{3}$. The assertion follows if for $i=1,2$, the class $[m_i]_n$ contains a $k_i$ satisfying $\gcd(k_i,r)=1$ and $m_i\not\equiv k_i \pmod{3}$. This is satisfied by choosing either $k_i\coloneqq m_i+ \frac{r}{3^{d}}$ or $k_i\coloneqq m_i+ \frac{2r}{3^{d}}$, where $d$ is the multiplicity of $3$ as a prime factor of $r$. Thus we obtain twice the number of equivalence classes in the latter case.
\end{proof}
\end{corollary}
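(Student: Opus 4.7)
The plan is to apply Theorem \ref{Thm:Main} in each of the four enumerated cases and count the resulting isomorphism classes; as the author suggests, one treats case (2) in detail and the others by the same template. The whole corollary is essentially a bookkeeping exercise on top of Theorem \ref{Thm:Main}, provided one can always lift a residue class modulo $n$ to an integer that is also coprime to $r$.

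First I would isolate this number-theoretic lifting lemma: for every $[k]_n \in U_n$ there exists $k' \in [k]_n$ with $\gcd(k',r)=1$. A direct construction is to set $k' := k + np$, where $p$ is the product of those prime divisors of $r$ which divide neither $k$ nor $n$. By definition $k' \equiv k \pmod n$, and any prime $q \mid r$ divides exactly one of $np$ and $k$, so $q \nmid k'$. This lemma legitimizes replacing the arbitrary $m_i$'s by the normal forms $k_i \in U_r$ used in the statement.

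Next I would split according to whether $3 \mid r$. If $3 \nmid r$ then $\frac{r(r-1)(r-2)}{3} \equiv 0 \pmod r$, so the first half of the invariant in Theorem \ref{Thm:Main} is vacuous, and isomorphism in case (2) is controlled solely by $m_j \equiv n_j \pmod n$ for $j = 1,2$. Combined with the lifting lemma, this gives a bijection between isomorphism classes and $U_n \times U_n$, yielding exactly $|U_n|^2$ classes. The two displayed normal forms $(n,k_1,k_2,1)$ and $(1,k_1,k_2,n)$ then correspond to the two subcases $k=0$ and $k=3$.

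For $3 \mid r$ I would reduce $\frac{r(r-1)(r-2)}{3} \equiv \frac{2r}{3} \pmod r$, so that the first invariant becomes the congruence $n_2^{-1}n_1 \equiv m_2^{-1}m_1 \pmod 3$, i.e.\ a binary invariant recording whether $m_1 \equiv m_2 \pmod 3$. If $3 \mid n$ this is automatic from the mod-$n$ condition and the count remains $|U_n|^2$; if $3 \nmid n$ it is a genuinely independent datum, and the count doubles to $2|U_n|^2$. To ensure independence I would strengthen the lifting lemma so that within each $[k]_n$ one can find representatives coprime to $r$ in any prescribed residue class modulo $3$, e.g.\ by further adjusting $k'$ by a suitable multiple of $r/3^d$ where $d$ is the $3$-adic valuation of $r$. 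The main obstacle is exactly this compatibility step, since one must realize the mod-$n$ and mod-$3$ conditions simultaneously and independently; parts (1), (3), and (4) then follow by the same recipe with the matching clauses of Theorem \ref{Thm:Main}.
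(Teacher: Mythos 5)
Your proposal is correct and follows essentially the same route as the paper's own proof: the same lifting lemma with the identical construction $k'=k+np$, the same case split on whether $3$ divides $r$, the same reduction $\tfrac{r(r-1)(r-2)}{3}\equiv\tfrac{2r}{3}\pmod{r}$, and the same adjustment by multiples of $r/3^{d}$ to realize the mod-$3$ condition independently of the mod-$n$ one. No substantive difference to report.
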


\section{Adjacency matrices}\label{adjacencymatrix}
For each $i=0,...,n$ let $(L_{2n+1}\times_{\underline{m}}\Zb_r)\left< i\right>$ be the subgraph of $L_{2n+1}\times_{\underline{m}}\Zb_r$ with vertex set $\{v_i\}\times \Zb_r$ and edges $\{e_{ii}\}\times \Zb_r$. 
\begin{definition}\cite[Definition 7.4]{errs}
 We call an admissible path $(e_{i_1,j_1},h_1)\cdots (e_{i_l,j_l},h_l)$ in $L_{2n+1}\times_{\underline{m}} \Zb_r$ $k$-step if there exists integers $0<t_1<t_2<\cdots <t_{k+1}$ such that $t_1=i_1$ and $t_{k+1}=j_l$ and for each $2\leq \alpha\leq k$ we have 
\[
 \{r((e_{i_s,j_s},h_s))| 1\leq s\leq l\}\cap ((L_{2n+1}\times_{\underline{m}}\Zb_r)\left< t_\alpha \right>)^0\neq \emptyset, 
 \]
 and 
 \[
\{r((e_{i_s,j_s},h_s))| 1\leq s\leq l\}\subseteq \bigcup_{i=1}^{k+1} ((L_{2n+1}\times_{\underline{m}}\Zb_r)\left< t_i \right>)^0.
\]
\end{definition}
Intuitively an admissible path is $k$-step if it touches vertices from precisely $k-1$ different levels not including the level the path starts at and ends in. Below, we give formulae for the number of 1-step, 2-step and 3-step admissible paths in each relevant case. For paths that only touch levels for which the corresponding weights are coprime to the order of the acting group, we refer to \cite[Lemma 7.6]{errs}, which describes this case to completion.

We will in the following, when they exist, let $m_i^{-1}$ denote the fixed representative multiplicative inverse of $m_i$ in $\Zb_r$ for which $0\leq m_i^{-1} \leq r-1$, and $a_i$ to denote a fixed representative of the multiplicative inverse of $m_i$ in $\Zb_n$ for which $0\leq a_i\leq n-1$.

\begin{lemma}{\textbf{1-step admissible paths}}
Let $r\in \Nb$, $r\geq 2$ and let $\underline{m}=(m_0,m_1,m_2,m_3)\in \Nb^4$ be such that for a single $\ell\in \lbrace 0,1,2,3\rbrace$, $\gcd(m_\ell,r)=n$ and $\gcd(m_i,r)=1$ for $i\neq \ell$. Let $t\in \lbrace 0,\dots, n-1 \rbrace$ then 
\begin{enumerate}
    \item For $i<\ell$, there are $\frac{r}{n}$ 1-step admissible paths from $(v_i,0)$ to $(v_\ell,t)$.
    \item For $i>\ell$, there are $\frac{r}{n}$ 1-step admissible paths from $(v_\ell,t)$ to $(v_i,0)$.
\end{enumerate}
\begin{proof}
This follows since at the $\ell$'th level we have $n$ loops each going through only one of the $(v_l,i)$, $i=0,1,...,n-1$.
\end{proof}
\end{lemma}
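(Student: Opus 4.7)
The plan is to decompose a 1-step admissible path into three segments and count choices using the cycle structure of the loops at each level. By definition, a 1-step path touches vertices only at the source and target levels, so no intermediate-level vertex is visited and the admissibility condition (which only restricts vertices at intermediate levels) is automatically satisfied. Thus for part~(1), any 1-step admissible path from $(v_i,0)$ to $(v_\ell,t)$ with $i<\ell$ must consist of: a sequence of loop edges $(e_{ii},\cdot)$ at level $i$ bringing $(v_i,0)$ to some $(v_i, k-m_i \pmod r)$; a single transit edge $(e_{i\ell},k)$ landing at $(v_\ell,k)$; and a sequence of loop edges $(e_{\ell\ell},\cdot)$ at level $\ell$ bringing $(v_\ell,k)$ to $(v_\ell,t)$.

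The counting then reduces to counting the admissible values of the crossing index $k\in\{0,\dots,r-1\}$. Since $\gcd(m_i,r)=1$, iterating loops at level $i$ visits all $r$ vertices of that level in a single cycle, so any target $(v_i, k-m_i)$ is reachable from $(v_i,0)$. Since $\gcd(m_\ell,r)=n$, the loops at level $\ell$ split into $n$ disjoint cycles each of length $r/n$, and two vertices $(v_\ell,a)$ and $(v_\ell,b)$ lie in the same cycle precisely when $a\equiv b\pmod n$. Hence $(v_\ell,t)$ is reachable from $(v_\ell,k)$ via level-$\ell$ loops if and only if $k\equiv t\pmod n$, giving exactly $r/n$ admissible values of $k$ in $\{0,\dots,r-1\}$, and hence $r/n$ admissible paths.

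Part~(2) would follow by the same decomposition with the roles of source and target levels swapped: the loops at level $\ell$ must now come first, and reachability forces $(v_\ell,t)$ to be in the same level-$\ell$ cycle as the source $(v_\ell, k-m_\ell)$ of the transit edge $(e_{\ell i},k)$. Since $m_\ell\equiv 0\pmod n$, this again reduces to the condition $k\equiv t\pmod n$, producing the same count of $r/n$.

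The only delicate point is the implicit counting convention: one has to invoke that an admissible path is uniquely determined by its transit edges and the endpoints at each level it touches, otherwise the loop portions could be iterated arbitrarily and the count would be infinite. This is standard in the Brzeziński--Szymański framework and is consistent with the all-coprime count \cite[Lemma 7.6]{errs} (which recovers $r$ in the case $n=1$); I would fix this convention at the start of the proof and then the remainder is the cycle-structure bookkeeping sketched above.
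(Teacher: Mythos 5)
Your argument is correct and is essentially the paper's own (one-line) proof spelled out in full: both rest on the observation that the loops at level $\ell$ decompose into $n$ cycles of length $r/n$ indexed by residues mod $n$, so the crossing edge $(e_{i\ell},k)$ (resp.\ $(e_{\ell i},k)$) must satisfy $k\equiv t \pmod n$, giving $r/n$ choices, each determining a unique admissible path. Your explicit remark on the counting convention (that the loop segments are forced to be minimal, so the path is determined by its transit edge) is a point the paper leaves implicit, but it is the same argument.
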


\begin{lemma}{\textbf{2-step admissible paths}}\label{2step}
Let $r\in \Nb$, $r\geq 2$ and let $\underline{m}=(m_0,m_1,m_2,m_3)\in \Nb^4$ be such that for a single $\ell\in \lbrace 0,1,2,3\rbrace$, $\gcd(m_\ell,r)=n$ and $\gcd(m_i,r)=1$ for $i\neq \ell$. Let $t\in \lbrace 0,\dots, n-1 \rbrace$.
\begin{enumerate}
    \item If $i<k<\ell-1$, there are $\frac{r(r-n)}{2n}+\left(a_k\cdot t-1+q_t n\right)\frac{r}{n}$ 2-step admissible paths from $(v_i,0)$ to $(v_\ell,t)$ passing through the $k$'th level for some $q_t\in \Zb$ if $t\neq 0$ and $\frac{r(r+n-2)}{2n}$ if $t=0$;
    
    \item If $i>k>\ell-1$, there are $\frac{r(r-n)}{2n}-\left(a_k\cdot t-1+q_tn\right)\frac{r}{n}$ 2-step admissible paths from $(v_\ell,t)$ to $(v_i,0)$ passing through the $k$'th level for some $q_t\in \Zb$ if $t\neq 0$ and $\frac{r(r-n)}{2n}$ if $t=0$;
    \item $If$ $i<\ell<j$, there are $\frac{r(r-n)}{2n}$ paths from $(v_i,0)$ to $(v_j,0)$ passing through the $\ell$'th level.
\end{enumerate}
\begin{proof}
$(1)$ First note that there is only one path from $(v_i,0)$ to $(v_k,sm_k)$ for $s=1,2,...,r-1$ not coming back to $(v_i,0)$ and not going through any vertices at the k-th level. We have an edge from $(v_k,sm_k)$ into the cycle containing $(v_\ell,t)$ if and only if 
\[
m_k(s+1)\equiv t \Mod{n}.
\]
Equivalently, $s\equiv a_kt-1 \Mod{n}$. Let $q_t\in \Z$ be such that $s_t:=a_kt-1+q_tn$ is an integer between $0$ and $n-1$. Hence $(v_k,s_tm_k)$ is the first vertex in level k which has a path ending in the cycle containing $(v_{\ell},t)$. The number of paths from $(v_k,sm_k)$ for $s=1,2,...,r-1$ to $(v_{\ell},t)$ is then 
\[
\begin{aligned}
&\frac{r-(s-h)}{n}, \ \text{if} \ s\equiv h=0,...,s_t \Mod{n}, \\
& \frac{r-(s-h)}{n}-1, \ \text{if} \ s\equiv h= s_t+1,...,n-1\Mod{n}. 
\end{aligned}
\]
The number of 2-step admissible paths then becomes 
\[
\begin{aligned}
&\sum_{h=0}^{s_t}\sum_{\substack{s=1, \\ s\equiv h \Mod{n}}}^{r-1} \frac{r-(s-h)}{n} + \sum_{h=s_t+1}^{n-1} \sum_{\substack{s=1, \\ s\equiv h \Mod{n} }}^{r-1} \left(\frac{r-(s-h)}{n}-1\right) \\
&
=\sum_{h=0}^{n-1}\sum_{\substack{s=1, \\ s\equiv h \Mod{n}}}^{r-1} \frac{r-(s-h)}{n}- \sum_{h=s_t+1}^{n-1} \sum_{\substack{s=1, \\ s\equiv h \Mod{n} }}^{r-1}1 \\
&= \sum_{j=1}^{\frac{r-n}{n}} n\frac{r-jn}{n} + (n-1)\frac{r}{n} - (n-1-s_t)\frac{r}{n} \\
&=  \frac{r(r-n)}{2n} + (a_kt-1+q_tn)\frac{r}{n}.
\end{aligned}
\]
If $t=0$ then $q_0=1$ hence the number of 2-step admissible paths becomes
\[
\frac{r(r-n)}{2n}+(n-1)\frac{r}{n}=\frac{r^2-rn+2nr-2r}{2n}=\frac{r(r+n-2)}{2n}.
\]
(2) Let $t>0$. First, we have precisely one edge from $(v_{\ell},t)$ to $(v_k,m_kh)$ if and only if $m_kh\equiv t \Mod{n}$ for $h=1,2,\dots,r-1$. The number of paths from $(v_k,m_kh)$ to $(v_i,0)$ is $r-h$. Hence the total number of admissible 2-step paths is 
\[
\begin{aligned}
\sum_{\substack{h=1, \\ h\equiv a_kt \Mod{n}}}^{r-1} (r-h)
&= \sum_{j=0}^{\frac{r-n}{n}} (r-(a_kt+q_tn+jn)) = \frac{r(r+n)}{2n}-(a_kt+q_tn)\frac{r}{n} \\
&= \frac{r(r-n)}{2n}+\frac{r}{n}-(a_kt+q_tn)\frac{r}{n}
=\frac{r(r-n)}{2n}-(a_kt-1+q_tn)\frac{r}{n},
\end{aligned}
\]
where $q_t\in\mathbb{Z}$ is such that $0<a_kt+q_tn<n$. If $t=0$ then the number of 2-step admissible paths becomes 
\[
\sum_{j=1}^{\frac{r-n}{n}}(r-jn)=\frac{r(r-n)}{2n}. 
\]
(3)  Note that for each $t\in \lbrace 1,\dots,n-1\rbrace$ and for each $h\in\lbrace  1, \dots, \frac{r}{n}-1 \rbrace$, there is precisely one edge from $(v_i,0)$ to $(v_\ell,t+m_\ell h)$, and the number of admissible paths from $(v_\ell,t+m_\ell h)$ to $(v_j,0)$ is $\frac{r}{n}-h$. Thus the number of admissible 2-step paths is

\[\sum_{t=0}^{n-1}\sum_{h=1}^{\frac{r}{n}-1} \frac{r}{n}-h = \frac{r(r-n)}{2n}. \]
\end{proof}
\end{lemma}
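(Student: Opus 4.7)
The proof will be a direct combinatorial enumeration. I would begin by recording the cycle structure of the skew product graph: at every ordinary level $j$ (with $\gcd(m_j,r)=1$) the loop edges form a single cycle of length $r$ through $(v_j,0),(v_j,m_j),\dots,(v_j,(r-1)m_j)$, while at the exceptional level $\ell$ they decompose into $n$ disjoint cycles of length $r/n$, one for each residue class of the second coordinate modulo $n$. This decomposition, combined with admissibility (which at every ordinary intermediate level $j$ forbids the vertex $(v_j,0)$, and at the exceptional level forbids $(v_\ell,0),\dots,(v_\ell,n-1)$), is what keeps the counts finite: at any intermediate level one may loop only part of the way around before reaching a forbidden vertex.

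For part~(1) I decompose every 2-step admissible path from $(v_i,0)$ to $(v_\ell,t)$ via level $k$ into five pieces: loops at $v_i$, one jump to $v_k$, loops at $v_k$, one jump to $v_\ell$, and loops at $v_\ell$. I parameterize such paths by the first vertex $(v_k,sm_k)$ reached at level $k$; admissibility forces $s\in\{1,\dots,r-1\}$, and given $s$ the first piece is uniquely determined by solving $(a+1)m_i\equiv sm_k\Mod{r}$. For the middle piece I count valid $b\in\{0,\dots,r-s-1\}$ (the upper bound ensures we do not touch $(v_k,0)$) satisfying the cycle condition $(s+b+1)m_k\equiv t\Mod{n}$, which is precisely the condition for the landing vertex $(v_\ell,(s+b+1)m_k)$ to lie in the cycle of $(v_\ell,t)$; each valid $b$ then forces the final piece. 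The total thus becomes
\[
\sum_{s=1}^{r-1}\#\bigl\{b\in\{0,\dots,r-s-1\}: b\equiv a_k t-1-s\Mod{n}\bigr\}.
\]

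The main computational step is to evaluate this sum. I would split by $h:=s\bmod n$ and observe that, depending on whether $h\le s_t$ or $h>s_t$ (where $s_t=a_k t-1+q_t n$ is the smallest non-negative representative of the required residue), the inner count is either $(r-(s-h))/n$ or $(r-(s-h))/n-1$. Combining the bulk, which telescopes via $\sum_{j=1}^{(r-n)/n}(r-jn)=r(r-n)/(2n)$, with the correction $(n-1-s_t)r/n$ then yields the claimed formula; the case $t=0$ corresponds to $q_0=1$, $s_0=n-1$ and follows by direct substitution.

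Parts~(2) and~(3) follow the same template with minor modifications. Part~(2) reverses direction and I would parameterize instead by the last vertex at level $\ell$ before the jump into $v_k$; the cycle condition is then symmetric and explains the minus sign. Part~(3) is the cleanest case since the exceptional level sits in the middle: for each of the $n$ short cycles at $v_\ell$ and each loop position $h\in\{1,\dots,r/n-1\}$ in that cycle, there is a unique continuation to $(v_j,0)$, producing the double sum $\sum_{t=0}^{n-1}\sum_{h=1}^{r/n-1}(r/n-h)=r(r-n)/(2n)$ directly. The main obstacle I expect is the clean separation of the two regimes $h\le s_t$ and $h>s_t$ in part~(1); once this bookkeeping is set up correctly, the remaining work is routine arithmetic.
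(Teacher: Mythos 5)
Your proposal is correct and follows essentially the same route as the paper: parameterizing by the first vertex $(v_k,sm_k)$ reached at the intermediate level, reducing the count to the congruence $s\equiv a_kt-1\Mod{n}$, splitting by the residue $h=s\bmod n$ relative to $s_t$, and evaluating the resulting telescoping sums (with the same treatment of parts (2) and (3)). No substantive differences to report.
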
    
\begin{remark}\label{dim35}
For quantum lens spaces of dimension 3, we see immediately by Lemma \ref{2step} that the adjacency matrices for a fixed $r$ and $n$, will all be the same. For quantum lens spaces of dimension $5$ the adjacency matrices are given by the following: 
\begin{center}
\begin{tabular}{c c c c}
$\gcd(m_0,r)=n$ & $\gcd(m_1,r)=n$ & &  $\gcd(m_2,r)=n$\\ 
$\begin{psmallmatrix}
 &   &  &\frac{r}{n} & y_0 
\\
 &  I_n &  & \vdots & \vdots 
\\
 &  &  & \frac{r}{n} & y_{n-1} 
\\   0 & \dots & 0 & 1 & r 
\\   0 & \dots & 0 & 0 & 1 
\end{psmallmatrix}$  &
$\begin{psmallmatrix}
1 & \frac{r}{n} & \frac{r}{n} & \dots & \frac{r}{n} & \frac{r(r+n)}{2n} \\
0 &  &  &  &  & \frac{r}{n}  \\
0 &  &  &  &  & \frac{r}{n} 
\\
0 & & & I_n  & & \vdots 
\\
\vdots & & & & & \frac{r}{n} 
\\
0 & 0 & 0  & \dots &  0 &  1 
\end{psmallmatrix}$ & &
$\begin{psmallmatrix}
1 & r & z_0 & \cdots & \cdots & z_{n-1}\\
0 & 1 & r/n & \cdots & \cdots & r/n  \\
0 & 0 &  &  &  & \\
\vdots & \vdots &  & I_{n} &  &  \\
0 & 0 &  &  &  &  \\
 0 & 0 &  &  &  & 
\end{psmallmatrix}$

\\
$y_0=\frac{r}{n}+\frac{r(r-n)}{2n}$, & & & 
$z_0=\frac{r(r+n)}{n}$, \\
$y_t=\frac{r(r-n)}{2n}-\frac{r}{n}(a_1-t-2)-q_tr$ & & & $z_t=\frac{r(r-n)}{2n}+a_1t+q_tr$
\end{tabular}
\end{center}
\end{remark}

We will now calculate the adjacency matrices for $7$-dimensional quantum lens spaces. 
\begin{lemma} Let $r\in \Nb$, $r\geq 2$ and let $\underline{m}=(m_0,m_1,m_2,m_3)$ be such that $\gcd(r,m_3)=n$ and $\gcd(r,m_i)=1,i\neq 3$. Then we may for each $0\leq l < r-1$ and each $0\leq t \leq n-1$ find $k_l,s_t\in \Zb$ such that
\[
A_{L_q^{7}(r;\underline{m})}= 
\begin{psmallmatrix}
1 & r & \frac{r(r+1)}{2} & x_0 & \cdots & \cdots & x_{n-1} \\
0 & 1 & r & y_0 & \cdots & \cdots & y_{n-1}\\
0 & 0 & 1 & r/n & \cdots & \cdots & r/n  \\
0 & 0 & 0 &  &  &  & \\
\vdots & \vdots & \vdots &  & I_{n} &  &  \\
0 & 0 & 0 &  &  &  &  \\
0 & 0 & 0 &  &  &  & 
\end{psmallmatrix}
\]
where
\[y_0=\frac{r(r+n)}{2n}\]
and
\[
x_0\equiv -m_2^{-1}m_1\frac{r(r-2)(r-1)}{3n}+\sum_{l=1}^{r-2} l\frac{r(1-k_l)}{n} +\sum_{h=0}^{n-1}\sum_{\mathclap{{\substack{l=1 \\l\equiv m_2a_1h-1\Mod{n}}}}}^{r-2}\frac{lh}{n} - \frac{r}{n}\Mod{r}. 
\]
For $t\geq 1$ we have 
\[y_t=\frac{r(r-n)}{2n}+a_2t\frac{r}{n} + rq_t\]
and
\[\begin{aligned}x_t&\equiv-m_2^{-1}m_1\frac{r(r-2)(r-1)}{3n} + \sum_{l=1}^{r-2} l\frac{r(1-k_l)}{n} \\ &+\mathop{\sum_{h=0}^{n-1}\sum_{l=1}^{r-2}}_{l\equiv m_2a_1h-1\Mod{n}}\frac{lh}{n}  - \mathop{\sum_{h=s_t+1}^{n-1}\sum_{l=1}^{r-2}}_{l\equiv m_2a_1h-1\Mod{n}}l +\frac{r}{n}\left(a_2t+a_1t-1 \right) \Mod{r}
\end{aligned}
\]
\begin{proof}
We will now calculate the number of 3-step admissible paths from $(v_0,0)$ to $(v_3,t)$ for $t=0,1,2,3$. First notice that there are exactly $l$ paths from $(v_0,0)$ to $(v_1,lm_1)$ not coming back to $(v_0,0)$, and exactly one edge from $(v_1,lm_1)$ to $(v_2,(l+1)m_1)$, hence we wish to find the number of paths from $(v_2,(l+1)m_1)$ to $(v_3,t)$, denoted $P_l$. Then the total number of 3-step admissible paths will be $\sum_{l=1}^{r-2}lP_l$.  

We can express $(v_2,(l+1)m_1)$ as $(v_2,sm_2)$, where $0<s\leq r$ satisfies $m_2s\equiv (l+1)m_1 \Mod{r}$.  As in the proof of Lemma \ref{2step}(1), we let $s_t$ denote a representative of the class $[a_2t-1]_n$ which lies between $0$ and $n-1$, and let $k_l$ be an integer such that $0 <m_2^{-1}m_1(l+1)+rk_l<r $. By reasoning as in Lemma \ref{2step} (1) we have 
\[
P_l=\frac{r-(m_2^{-1}m_1(l+1)+rk_l-h)}{n}
\]
if $s\equiv h=0,...,s_t \Mod{n}$ i.e. $l\equiv m_2a_1h-1 \Mod{n}$ and 
\[
P_l=\frac{r-(m_2^{-1}m_1(l+1)+rk_l-h)}{n}-1
\]
if $s\equiv h=s_t+1,...,n-1 \Mod{n}$. The number of 3-step admissible paths becomes 
\vspace{0.2cm} 
\\
\resizebox{\columnwidth}{!}{$
\begin{aligned}
\sum_{l=1}^{r-2}lP_l&=
\sum_{h=0}^{s_t}\sum_{\mathclap{{\substack{l=1 \\l\equiv m_2a_1h-1\Mod{n}}}}}^{r-2}l\frac{r-(m_2^{-1}m_1(l+1)+k_lr-h)}{n} +
\sum_{h=s_t+1}^{n-1}\sum_{\mathclap{{\substack{l=1 \\l\equiv m_2a_1h-1\Mod{n}}}}}^{r-2}l\left(\frac{r-(m_2^{-1}m_1(l+1)+k_lr-h)}{n}-1\right) \\
&=\sum_{h=0}^{n-1}\sum_{\mathclap{{\substack{l=1 \\l\equiv m_2a_1h-1\Mod{n}}}}}^{r-2}l\frac{r-(m_2^{-1}m_1(l+1)+k_lr-h)}{n} - \sum_{h=s_t+1}^{n-1}\sum_{\mathclap{{\substack{l=1 \\l\equiv m_2a_1h-1\Mod{n}}}}}^{r-2}l
\\
&=-\sum_{h=0}^{n-1}\sum_{\mathclap{{\substack{l=1 \\l\equiv m_2a_1h-1\Mod{n}}}}}^{r-2}\frac{m_2^{-1}m_1(l+1)l}{n} + \sum_{h=0}^{n-1}\sum_{\mathclap{{\substack{l=1 \\l\equiv m_2a_1h-1\Mod{n}}}}}^{r-2}l\frac{r(1-k_l)}{n}  +\mathop{\sum_{h=0}^{n-1}\sum_{l=1}^{r-2}}_{l\equiv m_2a_1h-1\Mod{n}}\frac{lh}{n}  - \mathop{\sum_{h=s_t+1}^{n-1}\sum_{l=1}^{r-2}}_{l\equiv m_2a_1h-1\Mod{n}}l
\\
&=-\sum_{l=1}^{r-2}\frac{m_2^{-1}m_1(l+1)l}{n} + \sum_{l=1}^{r-2}l\frac{r(1-k_l)}{n}  +\mathop{\sum_{h=0}^{n-1}\sum_{l=1}^{r-2}}_{l\equiv m_2a_1h-1\Mod{n}}\frac{lh}{n}  - \mathop{\sum_{h=s_t+1}^{n-1}\sum_{l=1}^{r-2}}_{l\equiv m_2a_1h-1\Mod{n}}l
\\
&= -m_2^{-1}m_1\frac{r(r-2)(r-1)}{3n}+\sum_{l=1}^{r-2} l\frac{r(1-k_l)}{n} +\mathop{\sum_{h=0}^{n-1}\sum_{l=1}^{r-2}}_{l\equiv m_2a_1h-1\Mod{n}}\frac{lh}{n}  - \mathop{\sum_{h=s_t+1}^{n-1}\sum_{l=1}^{r-2}}_{l\equiv m_2a_1h-1\Mod{n}}l. 
\end{aligned}
$}
\vspace{0.2cm}
\\
Adding up the 1-step, 2-step and 3-step admissible paths we arrive at the above adjacency matrix, here we also make use of \cite[Lemma 7.6 (i), (ii)]{errs}. 
\end{proof}
\end{lemma}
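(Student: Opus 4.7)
The plan is to derive each entry of the adjacency matrix by counting admissible paths in $L_7\times_{\underline m}\Zb_r$ classified according to how many intermediate levels they traverse, then assemble the contributions using the preceding lemmas and \cite[Lemma 7.6]{errs}. The main diagonal of ones comes from self-loops; the entry $r$ from $v_0$ to $v_1$, the entries $r/n$ from $v_2$ to each of the $n$ vertices at level 3, and the identity block at level 3 all follow from the 1-step admissible paths lemma. The entry $r(r+1)/2$ from $v_0$ to $v_2$ comes from \cite[Lemma 7.6]{errs} since $m_0,m_1,m_2$ are all coprime to $r$.

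For the $y_t$ entries, a path from $(v_1,0)$ to $(v_3,t)$ is either 1-step (contributing $r/n$) or 2-step through level 2 (contributing the expression of Lemma \ref{2step}(1) with $i=1$, $k=2$, $\ell=3$). Adding these contributions immediately yields $y_0=r(r+n)/(2n)$ and, for $t\geq 1$, $y_t=r(r-n)/(2n)+a_2 t\, r/n + rq_t$.

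The harder case is $x_t$, whose decomposition includes 1-step, 2-step through level 1, 2-step through level 2, and 3-step paths. The first three pieces follow from the same two lemmas. For the 3-step count from $(v_0,0)$ to $(v_3,t)$, I would factor each path uniquely as $(v_0,0)\to(v_1,lm_1)\to(v_2,(l+1)m_1)\to(v_3,t)$: there are $l$ initial subpaths avoiding $v_0$ for each $l=1,\dots,r-2$, exactly one edge from $(v_1,lm_1)$ to $(v_2,(l+1)m_1)$, and then $P_l$ admissible paths to $(v_3,t)$. To compute $P_l$, rewrite $(v_2,(l+1)m_1)$ as $(v_2,sm_2)$ with $s\equiv m_2^{-1}m_1(l+1)\Mod r$, and introduce $k_l\in\Zb$ so that $s=m_2^{-1}m_1(l+1)+rk_l\in(0,r)$. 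Then $P_l$ is exactly what Lemma \ref{2step}(1) computes in its proof: depending on the residue $h$ of $s$ modulo $n$, it equals $(r-(s-h))/n$ if $h\leq s_t$ and $(r-(s-h))/n-1$ if $h>s_t$, where $s_t\in[0,n-1]$ is the representative of $[a_2 t-1]_n$. The total 3-step count $\sum_{l=1}^{r-2}l P_l$ splits as a double sum over $h\in\{0,\dots,n-1\}$ and $l\equiv m_2 a_1 h -1\Mod n$, and the piece $\sum_{l=1}^{r-2} m_2^{-1}m_1(l+1)l/n$ telescopes using $\sum_{l=1}^{r-2}(l+1)l=(r-2)(r-1)r/3$ to the announced leading term $-m_2^{-1}m_1\, r(r-2)(r-1)/(3n)$.

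The main obstacle is the bookkeeping: correctly tracking how $k_l$ and $s_t$ depend on $l$ and $t$, interchanging the sums without double-counting, and isolating the correction term $-\sum_{h=s_t+1}^{n-1}\sum_l l$ that distinguishes $x_t$ from $x_0$. Everything else is a routine application of the two preceding lemmas and standard sum identities modulo $r$; once the 3-step total is in hand, adding the 1- and 2-step contributions and reducing modulo $r$ produces the stated formula for $x_t$.
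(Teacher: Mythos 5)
Your proposal is correct and follows essentially the same route as the paper: the same factorization of each 3-step path through $(v_1,lm_1)$ and $(v_2,(l+1)m_1)$, the same reparametrization $(v_2,sm_2)$ with the integers $k_l$ and $s_t$, the same case split on the residue $h$ of $s$ modulo $n$ borrowed from Lemma \ref{2step}(1), and the same use of \cite[Lemma 7.6]{errs} together with the 1-step and 2-step lemmas for the remaining entries. The telescoping identity $\sum_{l=1}^{r-2}(l+1)l=\tfrac{r(r-1)(r-2)}{3}$ you invoke is exactly how the paper obtains the leading term, so no further comparison is needed.
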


\begin{lemma}
\label{Lemma:m0}
Let $r\in \Nb$, $r\geq 2$ and let $\underline{m}=(m_0,m_1,m_2,m_3)$ be such that $\gcd(m_0,r)=n$ and $\gcd(m_i,r)=1, i\neq 0$. Then we may for each $0\leq l < r-1$ and each $0\leq t \leq n-1$ find $k_l,c_t,q_t\in \Zb$ such that
\[
A_{L_q^{7}(r;\underline{m})}=
\begin{psmallmatrix}
 &   &  &\frac{r}{n} & y_0 & x_0
\\
 &  I_n &  & \vdots & \vdots & \vdots 
\\
 &  &  & \frac{r}{n} & y_{n-1} & x_{n-1}
\\   0 & \dots & 0 & 1 & r & \frac{r(r+1)}{2}
\\   0 & \dots & 0 & 0 & 1 & r
\\   0 & \dots & 0 & 0 & 0 & 1
\end{psmallmatrix}
\]
where  
\[y_0=\frac{r(r-n)}{2n}+\frac{r}{n}\]
and 
\[
\begin{aligned}
x_0&\equiv -m_2^{-1}m_1\frac{r(r-2)(r-1)}{3n}+ \sum_{l=0}^{r-2} l\frac{r(1-k_l)}{n} \\ &-\sum_{h=0}^{n-1} \sum_{\mathclap{{\substack{l=1 \\l\equiv a_1h\pmod{n}}}}}^{r-2} \frac{a_1h+nq_h}{n}\left(r-m_2^{-1}m_1(l+1)-rk_l \right) \Mod{r}.
\end{aligned}
\]
For $t\geq 1$ we have 
\[
y_t=\frac{r(r-n)}{2n}-\frac{r}{n}(a_1t-2)-rq_t
\]
and
\vspace{0.2cm}
\\
\resizebox{\columnwidth}{!}{
$\begin{aligned} 
x_t &\equiv -m_2^{-1}m_1\frac{r(r-2)(r-1)}{3n}+ \sum_{l=0}^{r-2} l\frac{r(1-k_l)}{n} -\sum_{h=0}^{n-1} \sum_{\mathclap{{\substack{l=1 \\l\equiv a_1h\pmod{n}}}}}^{r-2} \frac{a_1h+nq_h}{n}\left(r-m_2^{-1}m_1(l+1)-rk_l \right) \\&+\sum_{h=c_t}^{n-1} \sum_{\mathclap{{\substack{l=1 \\l\equiv a_1h\pmod{n}}}}}^{r-2} \left(r-m_2^{-1}m_1(l+1)-rk_l \right) - \frac{r}{n}(a_2t+a_1t-3) \pmod{r}.
\end{aligned}
$}
\end{lemma}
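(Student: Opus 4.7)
The plan is to compute each entry of the adjacency matrix by summing the 1-step, 2-step, and 3-step admissible path counts between the corresponding vertices. The block structure is immediate from the shape of $L_7^{r;\underline{m}}$ when $\gcd(m_0,r)=n$: the $n$ level-0 vertices carry independent loops and live in distinct cycles of the skew product (giving the $I_n$ block and forcing the zero off-diagonal entries inside that block), and the bottom right $3\times 3$ block, indexed by the three coprime-level vertices, is the all-coprime adjacency matrix from \cite[Lemma 7.6]{errs}. The $\frac{r}{n}$ entries in the level-1 column are the 1-step count established in the preceding lemma.

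For the level-2 column, $y_t$ is the sum of the 1-step count $\frac{r}{n}$ (paths from $(v_0,t)$ to $(v_2,0)$ that skip level~1) and the 2-step count through level~1 furnished by Lemma~\ref{2step}(2) with $\ell=0$, $k=1$, $i=2$. A routine algebraic reshuffling, $\frac{r}{n}-(a_1 t-1+q_t n)\frac{r}{n}=-\frac{r}{n}(a_1 t-2)-rq_t$, produces the stated formula for $y_t$ (and $y_0=\frac{r(r-n)}{2n}+\frac{r}{n}$ from the $t=0$ case).

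The rightmost column is the main effort. For each $t$, the entry $x_t$ is the sum of (i) the 1-step count $\frac{r}{n}$, (ii) the two 2-step counts through level~1 and through level~2, both provided by Lemma~\ref{2step}(2), and (iii) the 3-step count through both intermediate levels. To compute the 3-step contribution I would parametrise an admissible path from $(v_0,t)$ to $(v_3,0)$ by the loop count $k_0\in\{0,\dots,r/n-1\}$ at level~0, which fixes the level-1 arrival $u_1=t+(k_0+1)m_0$ running over the $r/n$ residues congruent to $t\bmod n$, together with the subsequent loop counts at levels~1 and~2. Writing the level-2 arrival $u_2=u_1+(k_1+1)m_1$ in the form $s m_2\pmod r$ and splitting the inner sum according to the residue $h=s\bmod n$, the inner count of admissible continuations from $(v_2,u_2)$ into $(v_3,0)$ reduces to exactly the type of computation carried out in the proof of Lemma~\ref{2step} one level higher. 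Collecting terms, the triple sum yields the cubic main term $-m_2^{-1}m_1\frac{r(r-2)(r-1)}{3n}$ (via the identity $\sum_{l=1}^{r-2}l(l+1)=\frac{r(r-1)(r-2)}{3}$); the correction $\sum_{l}l\cdot\frac{r(1-k_l)}{n}$ from fixing the representative of $m_2^{-1}m_1(l+1)$ in $\{0,\dots,r-1\}$; the negative double sum in $h$ weighted by $\frac{a_1 h+nq_h}{n}$ recording the level-1 arrival residue; and the positive double sum truncated at $c_t$ encoding the admissibility cut-off at level~2. The additional term $-\frac{r}{n}(a_2 t+a_1 t-3)$ for $t\geq 1$ absorbs the 2-step corrections together with the shift arising from the constants picked up in $y_t$.

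The main obstacle is the simultaneous bookkeeping of the three modular representatives ($k_l$ modulo $r$, $q_h$ modulo $n$, and the cut-off $c_t$ modulo $n$) and the interplay between summation modulo $n$ (for the residue of the level-1 arrival) and modulo $r$ (for the loop counts at levels~1 and~2). Careful sign analysis and off-by-one tracking is required when translating between \emph{number of loops} and \emph{position} at each intermediate level, and one must verify that the intermediate sums which are a priori rational in fact collapse to integers. Once these conventions are fixed to match those in the statement and Lemma~\ref{2step} is applied with due care at each intermediate level, the computation parallels the preceding lemma almost verbatim.
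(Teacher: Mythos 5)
Your overall strategy --- computing each entry of $A_{L_q^{7}(r;\underline{m})}$ as the sum of the $1$-, $2$- and $3$-step admissible path counts, with the block structure, the $\tfrac{r}{n}$-column and the $y_t$-column read off from the $1$-step lemma and Lemma~\ref{2step}(2) --- is exactly what the paper does, and your reduction of $y_t$ is correct. The problem lies in the $3$-step count, which is the heart of the lemma. You split the triple sum ``according to the residue $h=s\bmod n$'' of the level-$2$ position index and speak of ``the admissibility cut-off at level~$2$''; that is the combinatorial structure of the \emph{preceding} lemma (where $\gcd(m_3,r)=n$), not of this one. Here $\gcd(m_2,r)=\gcd(m_3,r)=1$, so the number of admissible continuations from $(v_2,sm_2)$ to $(v_3,0)$ is simply $r-s$, with no $\bmod\ n$ dependence and no cut-off. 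The $\bmod\ n$ splitting and the cut-off $c_t$ in the stated formula arise at the \emph{other} end of the path: the number of admissible entries into level~$1$ from the length-$\tfrac{r}{n}$ cycle at level~$0$ containing $(v_0,t)$, accumulated up to the exit vertex $(v_1,lm_1)$, equals $\tfrac{l-(a_1h+nq_h)}{n}$ or $\tfrac{l-(a_1h+nq_h)}{n}+1$ according to whether the residue $h\equiv lm_1\pmod{n}$ lies before or after $c_t=a_1t+nq_t$, the index of the first level-$1$ vertex reachable from $(v_0,t)$. Multiplying this count by $r-\bigl(m_2^{-1}m_1(l+1)+rk_l\bigr)$ and summing over $l$ is what produces the weight $\tfrac{a_1h+nq_h}{n}$ and the $c_t$-truncated correction sum in the statement.

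Your later sentence attributing $h$ to ``the level-$1$ arrival residue'' contradicts your own setup $h=s\bmod n$, which suggests the terms of the answer were matched to the statement rather than derived. As set up, the computation would impose a nonexistent restriction at level~$2$ and miss the actual $t$-dependence coming from level~$0$, so it would not reproduce the stated $x_t$. The repair is a relocation of the bookkeeping rather than a new idea: condition on the level-$1$ exit vertex $(v_1,lm_1)$, count admissible entries from $(v_0,t)$ on one side and the free continuation $r-\bigl(m_2^{-1}m_1(l+1)+rk_l\bigr)$ on the other; after that your ``collecting terms'' paragraph goes through essentially as written.
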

\begin{proof} 
We will now calculate the number of 3-step admissible paths from $(v_0,t)$ to $(v_3,0)$. There is an edge from from $(v_0,t)$ to $(v_1,m_1l)$ only if $lm_1\equiv t \Mod{n}$.
Let $c_t$ be such that $(v_1,c_tm_1)$ is the first vertex in the 1st level which is connected to the cycle coming from $(v_0,t)$. For $0\leq h< n$ let $q_h$ be an integer such that $0<a_1h+nq_h<n$.
Then $c_t=a_1t+nq_t$ and the number of paths from $(v_0,t)$ to $(v_1,lm_1)$ is given by
\[
\frac{l-(a_1h+nq_h)}{n}
\]
if $lm_1\equiv h \Mod{n}$ for $0\leq h<c_t$ and 
\[
\frac{l-(a_1h+nq_h)}{n}+1
\]
if $lm_1\equiv h \Mod{n}$ for $c_t\leq h<n$. 
There is precisely one edge from $(v_1,lm_1)$ to $(v_2,m_1(l+1))$. We can express $(v_2,l(m_1+1))$ as $(v_2,sm_2)$ i.e. $m_1(l+1)\equiv sm_2 \Mod{r}$. Let $k_l$ be such that $0<m_2^{-1}m_1(l+1)+rk_l<n$. Then the number of paths from $(v_1,lm_1)$ to $(v_3,0)$ is $r-(m_2^{-1}m_1(l+1)+rk_l).$ The total number of 3-step admissible paths becomes: 
\[
\begin{aligned}
&\sum_{h=0}^{c_t-1} \sum_{\mathclap{{\substack{l=1 \\l\equiv a_1h\pmod{n}}}}}^{r-2} \frac{l-(a_1h+nq_h)}{n}\left(r-m_2^{-1}m_1(l+1)-rk_l \right)\\ 
&+\sum_{h=c_t}^{n-1} \sum_{\mathclap{{\substack{l=1 \\l\equiv a_1h\pmod{n}}}}}^{r-2} \left(\frac{l-(a_1h+nq_h)}{n}+1\right)\left(r-m_2^{-1}m_1(l+1)-rk_l \right) \\
&= -m_2^{-1}m_1\frac{r(r-2)(r-1)}{3n}+ \sum_{l=0}^{r-2} l\frac{r(1-k_l)}{n} \\ &-\sum_{h=0}^{n-1} \sum_{\mathclap{{\substack{l=1 \\l\equiv a_1h\pmod{n}}}}}^{r-2} \frac{a_1h+nq_h}{n}\left(r-m_2^{-1}m_1(l+1)-rk_l \right)+\sum_{h=c_t}^{n-1} \sum_{\mathclap{{\substack{l=1 \\l\equiv a_1h\pmod{n}}}}}^{r-2} \left(r-m_2^{-1}m_1(l+1)-rk_l \right) \\
\end{aligned}
\]
For $t=0$ we have $c_t=n$ hence the number of 3-step admissible paths is
\[
-m_2^{-1}m_1\frac{r(r-2)(r-1)}{3n}+ \sum_{l=0}^{r-2} l\frac{r(1-k_l)}{n} -\sum_{h=0}^{n-1} \sum_{\mathclap{{\substack{l=1 \\l\equiv a_1h\pmod{n}}}}}^{r-2} \frac{a_1h+nq_h}{n}\left(r-m_2^{-1}m_1(l+1)-rk_l \right).
\]
\end{proof}

\begin{lemma}\label{lemmam2}
Let $r\in \Nb$, $r\geq 2$ and let $\underline{m}=(m_0,m_1,m_2,m_3)$ be such that $\gcd(m_2,r)=n$ and $\gcd(m_i,r)=1, i\neq 2$. Then 
\[
A_{L_q^{7}(r;\underline{m})}=
\begin{psmallmatrix}
1 & r & \frac{r(r+n)}{2n} & \frac{r(r-n)}{2n}+ a_1\frac{r}{n} & \dots & \frac{r(r-n)}{2n}+ a_1(n-1)\frac{r}{n}  & x \\
0 & 1 & \frac{r}{n} & \frac{r}{n} & \dots & \frac{r}{n} & \frac{r(r+n)}{2n}\\
0 & 0 & & & & & \frac{r}{n}
\\
\vdots & \vdots & & & I_{n} & & \vdots
\\
0 & 0 &  & & & &\frac{r}{n}
\\
0 & 0 & 0& 0& \cdots & 0 & 1
\end{psmallmatrix}
\]
where
\[
x\equiv -m_1^{-1}m_2\frac{r(2r-n)(r-n)}{6n^2}+m_1^{-1}\frac{r(r-n)(n-1)}{4n}+\frac{r(r-1)}{2} \Mod{r}.
\]
\begin{proof}
We will now calculate the number of 3-step admissible paths from $(v_0,0)$ to $(v_0,3)$. First we have $m_1^{-1}m_2l-1+tm_1^{-1}+rs_t$ paths from $(v_0,0)$ to each $(v_1,lm_2-m_1+t)$ for $t=0,..,n-1$, where $s_l$ is such that $0<m_2^{-1}m_1l-1+tm_1^{-1}+rs_l<r$. $(v_1,lm_2-m_1+t)$ is connected to $(v_2,lm_2+t)$ by a single edge and there are $\frac{r}{n}-l$ paths from $(v_2,lm_2+t)$ to $(v_3,0)$. The total number of 3-step admissible paths becomes 
\[
\begin{aligned}
&\sum_{t=0}^{n-1}\sum_{l=1}^{\frac{r-n}{n}} \left(m_1^{-1}m_2l-1+tm_1^{-1}+rs_l \right) \left( \frac{r}{n}-l \right)
\\
&\equiv n \sum_{l=1}^{\frac{r-n}{n}} -l\left(m_1^{-1}m_2l-1 \right)+\sum_{t=0}^{n-1}\sum_{l=1}^{\frac{r-n}{n}} tm_1^{-1}\left( \frac{r}{n}-l \right) \Mod{r}
\\
&\equiv -m_1^{-1}m_2 \frac{r(2r-n)(r-n)}{6n^2} + \frac{r(r-n)}{2n}+\sum_{t=0}^{n-1} tm_1^{-1}\frac{r(r-n)}{2n^2} \Mod{r}
\\
&\equiv -m_1^{-1}m_2 \frac{r(2r-n)(r-n)}{6n^2} + \frac{r(r-n)}{2n}+ m_1^{-1}\frac{r(r-n)(n-1)}{4n} \Mod{r}.
\end{aligned} 
\]
\end{proof}

\end{lemma}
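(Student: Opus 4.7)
Each entry of the adjacency matrix counts admissible paths between vertices of $L_7\times_{\underline{m}}\Zb_r$, so the plan is to exhibit these counts. All entries other than $x$ follow directly from the one-step and two-step counts already established: the block $I_n$ records the $n$ loops at level $2$; the entries $r$ and $r/n$ in the last three columns come from combining loops at the source level with the adjacent-level jump supplied by the one-step lemma; and the $(0,2^t)$ and $(1,3)$ entries combine the single one-step direct jump with the two-step counts through the coprime intermediate level given by Lemma~\ref{2step}. Thus the real content lies in computing $x$, the total number of admissible paths from $(v_0,0)$ to $(v_3,0)$.

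For $x$, I split by step length. One-step paths contribute $r$; two-step paths through level $1$ contribute $\binom{r}{2}$ by the standard count of \cite[Lemma 7.6]{errs}; and two-step paths through level $2$ contribute $\tfrac{r(r-n)}{2n}$ by Lemma~\ref{2step}(3). Modulo $r$, using $n\mid r$ to observe $\tfrac{r(r-n)}{n}\equiv 0\pmod r$, these lower-step pieces consolidate into $\tfrac{r(r-1)}{2}-\tfrac{r(r-n)}{2n}$, explaining the $\tfrac{r(r-1)}{2}$ in the statement once a matching $+\tfrac{r(r-n)}{2n}$ shows up from the three-step computation. For the three-step count I factor an admissible path as $(v_0,0)\rightsquigarrow(v_1,lm_2-m_1+t)\to(v_2,lm_2+t)\rightsquigarrow(v_3,0)$, using the forced edge $(e_{12},lm_2+t)$ and parameterising the level-$2$ entry point by $l\in\{1,\dots,\tfrac{r-n}{n}\}$ and $t\in\{0,\dots,n-1\}$. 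A bijection with residues modulo $r$, arising from the admissibility condition that the level-$1$ segment avoid $(v_1,0)$, yields $m_1^{-1}m_2l-1+tm_1^{-1}+rs_l$ admissible subpaths in levels $0$--$1$ ending at $(v_1,lm_2-m_1+t)$, where $s_l\in\Zb$ is chosen so the expression lies in $[0,r-1]$; and on the right, the level-$2$ loop must terminate before returning to the forbidden vertex $(v_2,t)$, giving $\tfrac{r}{n}-l$ admissible continuations to $(v_3,0)$.

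Summing the product of the two counts over $l$ and $t$, the terms $rs_l(\tfrac{r}{n}-l)$ vanish modulo $r$. Splitting the remainder into $n\sum_l(m_1^{-1}m_2l-1)(\tfrac{r}{n}-l)+m_1^{-1}\sum_t t\sum_l(\tfrac{r}{n}-l)$, pulling out the factor $n\cdot\tfrac{r}{n}=r$ (which vanishes modulo $r$), and evaluating the surviving sums via the standard identities for $\sum_l l$, $\sum_l l^2$ with $N=\tfrac{r-n}{n}$ together with $\sum_{t=0}^{n-1}t=\tfrac{n(n-1)}{2}$, produces the three-step contribution
\[
-m_1^{-1}m_2\frac{r(r-n)(2r-n)}{6n^2}+\frac{r(r-n)}{2n}+m_1^{-1}\frac{r(r-n)(n-1)}{4n}\pmod r,
\]
and combining with the lower-step totals gives the claimed expression for $x$. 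The main obstacle is the subpath count on the left-hand side: one must verify carefully that the admissibility constraint together with the target-residue condition restricts the parameter pairs to exactly the window of $d$ values that produces $m_1^{-1}m_2l-1+tm_1^{-1}+rs_l$ with the correctly normalised offset $rs_l$.
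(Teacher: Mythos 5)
Your proposal is correct and follows essentially the same route as the paper's proof: the identical factorisation of a $3$-step path through $(v_1,lm_2-m_1+t)\to(v_2,lm_2+t)$, the same subpath counts $m_1^{-1}m_2l-1+tm_1^{-1}+rs_l$ and $\tfrac{r}{n}-l$, and the same summation and reduction modulo $r$. Your explicit bookkeeping of the $1$-step and $2$-step contributions (which the paper leaves implicit when passing from the computed $3$-step total to the stated value of $x$) is a welcome clarification, not a deviation.
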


We state the final case without proof, as the proof is similar to that of Lemma \ref{lemmam2}. 
\begin{lemma}\label{lemmam1}
Let $r\in \Nb$, $r\geq 2$ and let $\underline{m}=(m_0,m_1,m_2,m_3)$ be such that $\gcd(m_1,r)=n$ and $\gcd(m_i,r)=1, i\neq 1$. Then 
\[
A_{L_q^{7}(r;\underline{m})}=
\begin{psmallmatrix}
1 & \frac{r}{n} & \frac{r}{n} & \dots & \frac{r}{n} & \frac{r(r+n)}{2n} & x \\
0 &  &  &  &  & \frac{r}{n} & \frac{r(r-n)}{2n}+\frac{r}{n} \\
0 &  &  &  &  & \frac{r}{n} & \frac{r(r-n)}{2n}-\frac{r}{n}a_2+\frac{r}{n}
\\
0 & & & I_n  & & \vdots & \vdots
\\
\vdots & & & & & \frac{r}{n} & \frac{r(r-n)}{2n}-\frac{r}{n}a_2(n-1)+\frac{r}{n}
\\
0 & 0 & 0  & \dots &  0 &  1 & r
\\
0 & 0 & 0& 0& \cdots & 0 & 1
\end{psmallmatrix}
\]
where
\[
x\equiv -m_2^{-1}m_1\frac{r(2r-n)(r-n)}{6n^2}+m_2^{-1}\frac{r(r-n)(n-1)}{4n}+\frac{r(r-1)}{2} \Mod{r}.
\]
\end{lemma}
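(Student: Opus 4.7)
My approach mirrors the proof of Lemma~\ref{lemmam2}, with the roles of $m_1$ and $m_2$ interchanged. Every entry of the adjacency matrix other than $x$ can be read off from previous lemmas: the identity block is trivial; the $r/n$ entries arise as 1-step counts from the first lemma of this section; the entry $(1, n+2) = \frac{r(r+n)}{2n}$ combines the $r$ direct edges from level $0$ to level $2$ with the $\frac{r(r-n)}{2n}$ 2-step paths through level $1$ given by Lemma~\ref{2step}(3); and the column $v_{n+3}$ entries for level-$1$ vertices are $r/n$ (1-step) plus the 2-step counts through level $2$ from Lemma~\ref{2step}(2). Only the entry $x$, counting all admissible paths from $(v_0, 0)$ to $(v_3, 0)$, requires genuine computation.

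I write $x = N_1 + N_2 + N_3$ as a sum of 1-step, 2-step, and 3-step admissible path counts. The 1-step count is $N_1 = r$; the 2-step count decomposes as $\frac{r(r-1)}{2}$ through the coprime level $2$ by \cite[Lemma~7.6]{errs}, plus $\frac{r(r-n)}{2n}$ through the special level $1$ by Lemma~\ref{2step}(3). Modulo $r$ the contribution $N_1 = r$ vanishes, so the central task is to evaluate the 3-step count $N_3$.

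A 3-step admissible path from $(v_0, 0)$ to $(v_3, 0)$ has the form
\[
(v_0, 0) \leadsto (v_1, \alpha) \to (v_2, \alpha + m_2) \leadsto (v_3, 0),
\]
where $\alpha$ is the last vertex at level $1$ before the transition to level $2$. Since $\gcd(m_1, r) = n$ the cycles at level $1$ have length $r/n$, so I parametrize $\alpha = lm_1 + t$ with $t \in \{0, 1, \dots, n-1\}$ the cycle index and $l \in \{1, \dots, r/n - 1\}$ the position along it (the value $l = 0$ is excluded because it corresponds to the admissibility-forbidden base vertex $(v_1, t)$). For each such $\alpha$, the count of sub-paths from $(v_0, 0)$ to $(v_1, \alpha)$ and the count from $(v_2, \alpha + m_2)$ to $(v_3, 0)$ are explicit linear functions of $l$ and $t$ built from the representatives of $m_0^{-1}, m_3^{-1}$ in $\{0, \dots, r-1\}$ and $a_1$ in $\{0, \dots, n-1\}$. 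Substituting, interchanging the sums, and applying the identities $\sum_{l=1}^{r/n-1} l = \frac{r(r-n)}{2n^2}$, $\sum_{l=1}^{r/n-1} l^2 = \frac{r(r-n)(2r-n)}{6n^3}$, and $\sum_{t=0}^{n-1} t = \frac{n(n-1)}{2}$, one obtains
\[
N_3 \equiv -m_2^{-1} m_1 \frac{r(2r-n)(r-n)}{6 n^2} + \frac{r(r-n)}{2 n} + m_2^{-1} \frac{r(r-n)(n-1)}{4 n} \Mod{r}.
\]
When we form $x = N_1 + N_2 + N_3$, the two copies of $\frac{r(r-n)}{2n}$ (one from $N_2$ and one from $N_3$) combine to $\frac{r(r-n)}{n} \equiv 0 \Mod{r}$, and the remaining terms give the stated formula.

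The main obstacle is the modular bookkeeping: tracking the positive representatives of $m_0^{-1}, m_3^{-1}$ modulo $r$ and of $a_1$ modulo $n$, enforcing admissibility so that interior vertices avoid $(v_1, t)$ and $(v_2, 0)$, and verifying that the $q_l r$-type correction terms arising from representative choices vanish upon reduction modulo $r$. Once this bookkeeping is carried through in the same spirit as Lemma~\ref{lemmam2}, the resulting expression for $x$ matches the statement exactly.
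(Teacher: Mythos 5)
Your proposal is correct and matches the paper's intent exactly: the paper states this lemma without proof, deferring to the argument of Lemma \ref{lemmam2}, and your decomposition $x=N_1+N_2+N_3$, your value of $N_3$ (the $m_1\leftrightarrow m_2$ swap of the Lemma \ref{lemmam2} count), and the cancellation of the two $\frac{r(r-n)}{2n}$ terms modulo $r$ all check out. Two bookkeeping slips should be fixed in a full write-up: the edge out of level $1$ lands at $(v_2,\alpha+m_1)$, not $(v_2,\alpha+m_2)$, and the sub-path counts are built from $l$ itself (there are exactly $l$ admissible paths from $(v_0,0)$ to $(v_1,t+lm_1)$) and the representative of $m_2^{-1}$ in $\{0,\dots,r-1\}$, rather than from $m_0^{-1}$, $m_3^{-1}$ and $a_1$ --- indeed $a_1$ does not exist in this case since $n\mid m_1$.
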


\section{The invariant}\label{invariant}
We are now ready to proof Theorem \ref{Thm:Main}. The proof of Theorem \ref{5dimensional} follows by a similar but much easier approach. 
\begin{proof}[Proof of Theorem \ref{Thm:Main} (1)] We will prove the result in the case where $k=3$, the proof for $k=0$ follows by a similar approach. 

Assume that the $C^*$-algebras coming from the weights $\underline{m}=(m_0,m_1,m_2,m_3)$ and $\underline{n}=(n_0,n_1,n_2,n_3)$ are isomorphic. We denote by $x_0,...,x_{n-1}, y_0,...,y_{n-1}$ and $x_0',...,x_{n-1}'$, $y_0',...,y_{n-1}'$ the elements $x_i$ and $y_i$ coming from Lemma \ref{Lemma:m0} corresponding to $\underline{m}$ and $\underline{n}$ respectively. Then by \cite[Theorem 7.1]{errs} there exists $U,V\in SL_{\mathcal{P}}(\textbf{1},\mathbb{Z})$ such that $UB_{\underline{m}}V=B_{\underline{n}}$ where $\mathcal{P}=\{1,2,....,,n,n+1,n+2,n+3\}$ with the order $3>2>1$, $4+i>3$ for $i=0,1,...,n-1.$ We then get the following equations 
\begin{equation}\label{matrixequa1}
    y_j'\equiv y_j+u_{23}\frac{r}{n}+rv_{3,j+4}, \ \ \ \
x_j'\equiv x_j+u_{12}y_j+v_{3,j+4}\frac{r(r+1)}{2}+u_{13}\frac{r}{n} \Mod{r},
\end{equation}
for $j=0,1,...,n-1$ where $u_{lm}, v_{lm}\in\mathbb{Z}$ are the entries of $U$ and $V$ respectively. 

Since $y_0=y_0'$, $n$ divides $u_{23}$. Let $a_2=m_2^{-1} \Mod{n}$, $a_2'=n_2^{-1} \Mod{n}$, $a_1=m_1^{-1} \Mod{n}$ and $a_1'=n_1^{-1} \Mod{n}$. Then by (\ref{matrixequa1})
\[
a_2'\frac{r}{n}\equiv a_2\frac{r}{n}+u_{23}\frac{r}{n} \Mod{r},
\]
hence there exists a $k\in\Z$ such that 
\[
(a_2'-a_2)\frac{r}{n}=u_{23}\frac{r}{n}+kr.
\]
Then $\frac{a_2'-a_2}{n}\in \Z$ and $a_2'=a_2$ which implies that $m_2\equiv n_2 \Mod{n}$. 

We now consider the sum of all the $x_i's$. We have 
\[
\begin{aligned}
x_0+x_1+...+x_{n-1}&\equiv -m_2^{-1}m_1\frac{r(r-2)(r-1)}{3}+ n\sum_{h=0}^{n-1}\sum_{\mathclap{{\substack{l=1 \\l\equiv m_2a_1h-1\pmod{n}}}}}^{r-2}\frac{lh}{n} \\ &- \sum_{t=1}^{n-1} \sum_{h=s_t+1}^{n-1}\sum_{\mathclap{{\substack{l=1 \\l\equiv m_2a_1t-1\pmod{n}}}}}^{r-2}l +\sum_{t=1}^{n-1}\frac{r}{n}\left(a_1t+a_2t) \right)\pmod{r} \\
&\equiv 
-m_2^{-1}m_1\frac{r(r-2)(r-1)}{3}+\sum_{h=0}^{n-1}\sum_{\mathclap{{\substack{l=1 \\l\equiv m_2a_1h-1\pmod{n}}}}}^{r-2}lh \\ &- \sum_{t=1}^{n-1} \sum_{h=s_t+1}^{n-1}\sum_{\mathclap{{\substack{l=1 \\l\equiv m_2a_1t-1\pmod{n}}}}}^{r-2}l 
+ \frac{r(n-1)}{2}(a_1+a_2) \pmod{r}.
\end{aligned}
\]
The last term is always congruent to $0$ modulo $r$, indeed if $n$ is odd we are done, if $n$ is even then $a_1+a_2$ is even. 

Since each $s_t$ corresponds uniquely to a number between $0$ and $n-1$, we may reiterate the penultimate sum accordingly:
\[
\begin{aligned}
\mathop{\sum_{t=1}^{n-1} \sum_{h=s_t+1}^{n-1}\sum_{l=1}^{r-2}}_{l\equiv m_2a_1h-1\Mod{n}}l = \mathop{\sum_{t=1}^{n-1}
\sum_{h=t}^{n-1}\sum_{l=1}^{r-2}}_{l\equiv m_2a_1h-1\Mod{n}}l  
=\sum_{h=1}^{n-1}\sum_{\mathclap{{\substack{l=1 \\l\equiv m_2a_1h-1\Mod{n}}}}}^{r-2}hl. 
\end{aligned}
\]
Hence 
\[
x_0+x_1+...+x_{n-1}\equiv -m_2^{-1}m_1\frac{r(r-2)(r-1)}{3} \Mod{r}.
\]
Using (\ref{matrixequa1}) and the fact that $(a_2-1)\frac{r(n-1)}{2}\equiv 0 \Mod{r}$, we have 
\vspace{0.2cm}
\\
\resizebox{\columnwidth}{!}{$
\begin{aligned}
&(x_0'+x_1'+...+x_{n-1}')-(x_0+x_1+...+x_{n-1})\equiv u_{12}(y_0+y_1+...+y_{n-1})\\ &+nu_{13}\frac{r}{n} + (v_{34}+v_{35}+...+v_{3,n+3})\frac{r(r+1)}{2} \Mod{r} \\
&\equiv u_{12}\left(\frac{r(r+n)}{2n}+ (n-1)\frac{r(r-n)}{2n}+a_2\frac{r}{n}\sum_{t=1}^{n-1} t\right)+(v_{34}+v_{35}+...+v_{3,n+3})\frac{r(r+1)}{2} \Mod{r}  \\
&\equiv
u_{12}\frac{r(r+1)}{2}+(v_{34}+v_{35}+...+v_{3,n+3})\frac{r(r+1)}{2} \Mod{r} \\
&\equiv
(v_{34}+v_{35}+...+v_{3,n+3})\frac{r(r+1)}{2}\Mod{r} \equiv 0 \Mod{r}.
\end{aligned}
$} 
\vspace{0.2cm}
\\
The last congruence follows by \cite[the proof of Theorem 7.8]{errs}. Hence
\[
\left(m_2^{-1}m_1-n_2^{-1}n_1\right)\frac{r(r-1)(r-2)}{3}\equiv 0 \Mod{r}. \]
We now wish to compute the last part of the invariant which is $m_1\equiv n_1 \Mod{n}$ i.e. $a_1=a_1'$. 
For $h=s_t+1,...,n-1$ let $p_h\in\Z$ be such that $0<m_2a_1h-1+np_h<n$. First we need to expand the following sum: 
\vspace{0.2cm}
\\
\resizebox{\columnwidth}{!}{
$
\begin{aligned}
&\mathop{\sum_{h=s_t+1}^{n-1}\sum_{l=1}^{r-2}}_{l\equiv m_2a_1h-1\Mod{n}}l 
= \sum_{h=s_t+1}^{n-1}\sum_{k=0}^{\frac{r-n}{n}}\left( m_2a_1h-1+np_h+nk\right) = \sum_{h=s_t+1}^{n-1}\left(\frac{r}{n}(m_2a_1h-1)+rp_h+\frac{r(r-n)}{2n} \right) \\
&\equiv \frac{r}{n}m_2a_1\left(\frac{n(n-1)}{2}+\frac{a_2t(1-a_2t)}{2}\right)+\frac{r}{n}(a_2t-n)+(n-a_2t)\frac{r(r-n)}{2n} \Mod{r}.
\end{aligned}
$}
\vspace{0.2cm}
\\
We will now find an expression for $(x_0'-x_t')-(x_0-x_t)$ and then consider the expressions for $t=1$ and $t=n-1$. From the above we have
\vspace{0.2cm}
\\
\resizebox{\columnwidth}{!}{
$
\begin{aligned}
(x_0'-x_t')-(x_0-x_t)&\equiv \mathop{\sum_{h=s_t+1}^{n-1}\sum_{l=1}^{r-2}}_{l\equiv m_2a_1'h-1\Mod{n}}l -\mathop{\sum_{h=s_t+1}^{n-1}\sum_{l=1}^{r-2}}_{l\equiv m_2a_1h-1\Mod{n}}l + \frac{r}{n}(a_1-a_1')t \Mod{r} \\
&\equiv \frac{r}{n}\left(\frac{n(n-1)}{2}-\frac{a_2t(a_2t-1)}{2}\right)m_2(a_1'-a_1)+ \frac{r}{n}(a_1-a_1')t \pmod{r}.
\end{aligned}
$}
\vspace{0.2cm}
\\
On the other hand by (\ref{matrixequa1}) we have 
\[
\begin{aligned}
(x_0'-x_t')-(x_0-x_t)&\equiv u_{12}(y_0-y_t)+(v_{34}-v_{3,t+4})\frac{r(r+1)}{2} \pmod{r} \\
&\equiv -u_{12}a_2t\frac{r}{n} \pmod{r},
\end{aligned}
\]
which follows since $v_{34}=-\frac{u_{23}}{n}$. Indeed, we have $y_0'=y_0$ and 
\[
\frac{r(r-n)}{2n}+a_2't\frac{r}{n}=y_t'=rv_{3,t+4}+y_t+u_{23}\frac{r}{n}=rv_{3,t+4}+\frac{r(r-n)}{2n}+a_2t\frac{r}{n}+u_{23}\frac{r}{n}
\]
hence $rv_{3,t+4}+u_{23}\frac{r}{n}=(a_2'-a_2)t\frac{r}{n}=0$ and $v_{3,t+4}=-\frac{u_{23}}{n}$.
\\

Combining the two expressions for $(x_0'-x_t')-(x_0-x_t)$ we get 
\[
\frac{r}{n}\left(\frac{n(n-1)}{2}-\frac{a_2t(a_2t-1)}{2}\right)m_2(a_1'-a_1)+ \frac{r}{n}(a_1-a_1')t+u_{12}a_2t\frac{r}{n}\equiv 0 \Mod{r}.
\]
Note that 
\[
\frac{r}{n}\left(\frac{n(n-1)}{2}\right)m_2(a_1-a_1')=r\left(\frac{n-1}{2}\right)m_2(a_1-a_1')\equiv 0 \mod r,
\]
since if $n$ is even then $2$ divides $a_1-a_1'$ and if $n$ is odd $2$ divides $n-1$. Thus we have
\begin{equation}\label{x1xt}
\left(\frac{r}{n}\frac{a_2t(a_2t-1)}{2}m_2+t\right)(a_1-a_1')+u_{12}a_2t\frac{r}{n}\equiv 0 \pmod{r}.
\end{equation}
By taking the sum of the expressions in (\ref{x1xt}) where we choose $t=1$ and $t=n-1$, we arrive at 
\[
\frac{r}{n}a_2^2m_2(a_1-a_1')\equiv 0 \pmod{r}.
\]
Then $\frac{r}{n}a_2^2m_2(a_1-a_1')=rk$ for some $k\in\mathbb{Z}$ hence $n$ divides $a_2^2m_2(a_1-a_1')$. Since $a_2$ and $m_2$ are both relatively prime to $n$, we conclude that  $a_1=a_1'$. 

For the other direction we will make use of \cite[Proposition 2.14]{errs} a number of times. Assume $m_i\equiv n_i \pmod{n}$, $i=1,2$ and  $\left(m_2^{-1}m_1-n_2^{-1}n_1\right)\frac{r(r-1)(r-2)}{3} \pmod{r}  \equiv 0 \pmod{r}$, then the entries, $y_t$, in the second row of the adjacency matrices are identical, and, it suffices to show for each $t$,
\[
\begin{aligned}
x_t'-x_t\equiv\left(m_2^{-1}m_1-n_2^{-1}n_1\right)\frac{r(r-1)(r-2)}{3n} +\sum_{l=1}^{r-2}l\frac{r}{n}(k_l'-k_l) \pmod{r}
\end{aligned}
\]
is an integer multiple of $\tfrac{r}{n}$. For the second term this is obvious. Additionally, it is obvious that this is also true for the first term, whenever $r$ is not a multiple of $3$. If $3\vert r$, then one finds that $3\vert m_2^{-1}m_1-n_2^{-1}n_1$, and the claim follows.

The adjacency matrices of each set of weights will then be identical after adding the third row to the first, and the first column to the $t$'th column in each an appropriate number of times.
\\
\\
(3). Proceeding as in part (1), we consider $C^*$-algebras coming from the weights $\underline{m}=(m_0,m_1,m_2,m_3)$ and $\underline{n}=(n_0,n_1,n_2,n_3)$ that are isomorphic. We denote by $x$ and $x'$ the elements coming from the top-right corner of the adjacency matrix as written in Lemma \ref{lemmam1} corresponding to $\underline{m}$ and $\underline{n}$ respectively. In a similar manner, it follows from \cite[Theorem 7.1]{errs}, and a computation that

\[\frac{a_1'r}{n}\equiv \frac{a_1r}{n} \pmod{r},\]
from which it follows that $m_1\equiv n_1 \pmod{n}$. Moreover, we obtain from the adjacency matrices that
\[x'-x\equiv \left( n_2^{-1}n_1-m_2^{-1}m_1 \right)\frac{r(2r-n)(r-n)}{6n^2}+ \left( n_1^{-1}-m_1^{-1} \right)\frac{r(r-n)(n-1)}{4n} \pmod{r},\]
and using \cite[Theorem 7.1]{errs}, we obtain $k_0,k_1\in \Zb$ such that
\[x'-x \equiv \frac{r(r+n)}{2n}k_0 +\frac{r}{n}k_1 \pmod{r}.\]
Consequently,
\vspace{0.2cm}
\\
\resizebox{\columnwidth}{!}{
$\left( n_2^{-1}n_1-m_2^{-1}m_1 \right)\frac{r(2r-n)(r-n)}{6n^2}+ \left( n_1^{-1}-m_1^{-1} \right)\frac{r(r-n)(n-1)}{4n}\equiv \frac{r(r+n)}{2n}k_0 +\frac{r}{n}k_1 \Mod{r}.$}
\vspace{0.1cm}
\\
Multiplying both sides by $2n$ yields
\[\left( n_2^{-1}n_1-m_2^{-1}m_1 \right)\frac{r(2r-n)(r-n)}{3n}+ \left( n_1^{-1}-m_1^{-1} \right)\frac{r(r-n)(n-1)}{2}\equiv 0 \Mod{r}.\]
Now, it is easy to check that the second term is always congruent to zero$\Mod{r}$, so we conclude that \[\left( n_2^{-1}n_1-m_2^{-1}m_1 \right)\frac{r(2r-n)(r-n)}{3n}\equiv 0 \pmod{r}.\]

Conversely, assuming that  
\begin{equation}\label{Eq:Main3}
    \left( n_2^{-1}n_1-m_2^{-1}m_1 \right)\frac{r(2r-n)(r-n)}{3n}=t \cdot r,
\end{equation}
we may argue as in the previous part, by showing that $x-x'$ is an integer multiple of $\frac{r}{n}$. It follows by a computation that
\[x-x'=\frac{r}{n}\left(\frac{2t+(m_1^{-1}-n_1^{-1})(r-n)(n-1)}{4} \right).\] Since $(m_1^{-1}-n_1^{-1})(r-n)(n-1)$ is necessarily divisible by 4 (recall that the only valid cases are the ones where $r$ and $n$ are both odd or both even, or $r$ is even and $n$ is odd, and that $m_1^{-1}$ and $n_1^{-1}$ are both odd if $r$ is even), it suffices to show that $t$ is even. If $r$ and $n$ are both odd, or both even, then this follows immediately by inspecting \eqref{Eq:Main3}. If $r$ is even and $n$ is odd, it follows that $n_2$ and $m_2$ are odd, and we may again conclude that $t$ is even.

It remains to be shown that 
$\left( n_2^{-1}n_1-m_2^{-1}m_1 \right)\frac{r(2r-n)(r-n)}{3n} \equiv 0 \pmod{r}$
if and only if $\left( n_2^{-1}n_1-m_2^{-1}m_1 \right)\frac{r(r-1)(r-2)}{3} \equiv 0 \pmod{r}$. It is routine to show that if $3$ does not divide $r$,, then $3$ divides either $2r-n$ or $r-n$. Consequently, if $3$ does not divide $r$, then the claim is trivial. If $\left(n_2^{-1}n_1-m_2^{-1}m_1 \right)\frac{r(r-1)(r-2)}{3} \equiv 0 \pmod{r}$ and $3\vert r$, then  $3\vert\left( n_2^{-1}n_1-m_2^{-1}m_1 \right)$ and one direction follows. The converse follows, remarking that $3\vert\left( n_2^{-1}n_1-m_2^{-1}m_1 \right)$ if $\left( n_2^{-1}n_1-m_2^{-1}m_1 \right)\frac{r(2r-n)(r-n)}{3n} \equiv 0 \pmod{r}$.


The proof of (2) is identical to that of (3) remarking that the adjacency matrix corresponding to the system of weights $\underline{m}\coloneqq (m_0,m_1,m_2,m_3)$ with $\gcd(m_1,r)=n$ and $\gcd(m_i,r)=1$ if $i\neq 2$ is the anti-transpose of the adjacency matrix corresponding to the system $\underline{m}'\coloneqq(m_0,m_2,m_1,m_3)$. By \cite[Definition 1.7]{GS07}, the adjacency matrices are related by the identity
\[A_{L_q^{7}(r;\underline{m})}= JA_{L_q^{7}(r;\underline{m}')}^TJ,\]
where $J$ is the involutory matrix whose entries are $1$ on the second diagonal and $0$ elsewhere.
\end{proof}

\begin{remark}
In this paper we have only dealt with the case there a single weight is coprime to the order of the acting group, $r$. There is however no clear reason why a similar result should be unobtainable in a more general setting. In particular, if we consider a list of weights $(m_0,m_1,m_2,m_3)$, then the methods for computing the adjacency matrices of the corresponding graph, could very likely be identical or similar, albeit more tedious, to the ones used above if at least one of $m_1$ or $m_2$ is coprime to $r$. If both weights are coprime, it is likely that an entirely different approach to counting is necessary since all methods employed so far have required one of them to be a unit of $\Zb_r$.
\end{remark}

\end{document}